\DeclareFontFamily{OT1}{pzc}{}
\DeclareFontShape{OT1}{pzc}{m}{it}{<-> s * [1.10] pzcmi7t}{}
\DeclareMathAlphabet{\mathpzc}{OT1}{pzc}{m}{it}
\def\myrightarrow{{\setbox\z@\hbox{$\rightarrow$}\dimen0\ht\z@\multiply\dimen0 6\divide\dimen0 10\ht\z@\dimen0\box\z@}}
\def\myrightarrowfill@{\arrowfill@\relbar\relbar\myrightarrow}
\newcommand{\myxrightarrow}[2][]{\ext@arrow 0359\myrightarrowfill@{#1}{#2}}
\newcommand{\isoto}{\myxrightarrow{\,\sim\,}}
\DeclareMathOperator{\bA}{\mathbb{A}}
\DeclareMathOperator{\bP}{\mathbb{P}}
\DeclareMathOperator{\C}{\mathbb{C}}
\DeclareMathOperator{\Q}{\mathbb{Q}}
\DeclareMathOperator{\Z}{\mathbb{Z}}
\DeclareMathOperator{\sA}{\mathcal{A}}
\DeclareMathOperator{\sC}{\mathcal{C}}
\DeclareMathOperator{\sE}{\mathcal{E}}
\DeclareMathOperator{\sF}{\mathcal{F}}
\DeclareMathOperator{\sG}{\mathcal{G}}
\DeclareMathOperator{\sK}{\mathcal{K}}
\DeclareMathOperator{\sL}{\mathcal{L}}
\DeclareMathOperator{\sM}{\mathcal{M}}
\DeclareMathOperator{\sN}{\mathcal{N}}
\DeclareMathOperator{\sO}{\mathcal{O}}
\DeclareMathOperator{\sQ}{\mathcal{Q}}
\DeclareMathOperator{\sU}{\mathcal{U}}
\DeclareMathOperator{\sX}{\mathcal{X}}
\DeclareMathOperator{\sY}{\mathcal{Y}}
\DeclareMathOperator{\Spec}{Spec}
\DeclareMathOperator{\Proj}{Proj}
\DeclareMathOperator{\rg}{rg}
\DeclareMathOperator{\Isom}{Isom}
\DeclareMathOperator{\Ker}{Ker}
\DeclareMathOperator{\Coker}{Coker}
\DeclareMathOperator{\Hull}{Hull}
\DeclareMathOperator{\Supp}{Supp}
\DeclareMathOperator{\GL}{GL}
\DeclareMathOperator{\PGL}{PGL}
\DeclareMathOperator{\Diff}{Diff}
\DeclareMathOperator{\QHusk}{QHusk}
\DeclareMathOperator{\Quot}{Quot}
\DeclareMathOperator{\red}{red}
\DeclareMathOperator{\Pic}{Pic}
\DeclareMathOperator{\prof}{prof}
\date{Janvier 2019}
\title{R\'eduction stable en dimension sup\'erieure}
\author{Olivier Benoist}
\address{CNRS, DMA\\
\'Ecole normale sup\'erieure\\
45 rue d'Ulm\\
75230 Paris Cedex 05}
\email{olivier.benoist@ens.fr}
\begin{document}
\maketitle

\section*{Introduction}

  L'espace de modules $M_g$ des courbes lisses de genre $g\geqslant 2$ construit par Mumford~\cite{GIT} est une vari\'et\'e alg\'ebrique  dont les points complexes sont naturellement en bijection avec les classes d'isomorphisme de courbes projectives lisses complexes de genre~$g$ (nous renvoyons \`a~\cite{AJP} et \`a~\cite{KoMum} pour un aper\c{c}u de l'histoire de ce sujet).

 Que ce soit pour \'etudier les d\'eg\'en\'erescences de familles de courbes lisses ou la g\'eom\'etrie de la vari\'et\'e $M_g$ elle-m\^eme, il est utile de disposer d'une compactification projective de $M_g$ qui soit \textit{modulaire}, c'est-\`a-dire qui param\`etre encore des courbes alg\'ebriques, \'eventuellement singuli\` eres. Une telle compactification a \'et\'e construite par  Deligne et Mumford~\cite{DM} : c'est l'espace de modules  des courbes stables $\overline{M}_g$.

  La recherche d'espaces de modules analogues param\'etrant des vari\'et\'es de dimension sup\'erieure a suscit\'e de nombreux travaux.
Pour obtenir une th\'eorie similaire, on se restreint aux vari\'et\'es dont le fibr\'e canonique est ample \footnote{Par le biais de leurs mod\`eles canoniques, cela prend en compte toutes les vari\'et\'es de type g\'en\'eral. Des compactifications modulaires ont aussi \'et\'e construites, par d'autres m\'ethodes que celles expliqu\'ees ici, pour d'autres espaces de modules : vari\'et\'es ab\'eliennes~\cite{Aleva}, certaines vari\'et\'es de Fano~\cite{LWX}.}. 
 Le cas des surfaces a alors \'et\'e r\'esolu par Koll\'ar, Shepherd-Barron et Alexeev ~\cite{KSB, Kocomplete, Ale}, et Viehweg~\cite{Viehweg} a trait\'e le cas des vari\'et\'es lisses en dimension arbitraire.

 Le cas g\'en\'eral a fait l'objet d'avanc\'ees r\'ecentes,
 d\'ecrites dans ce rapport. Ces progr\`es sont dus au d\'eveloppement du programme des mod\`eles minimaux  par Birkar, Cascini, Hacon, McKernan et Xu~\cite{BCHM, HX, HMX}, \`a de nombreux travaux de Koll\'ar~\cite{Kocomplete,KoHH,Kosing,Kobook}, ainsi qu'\`a Fujino, Kov\'acs et Patakfalvi~\cite{Fujino, KoPa}. 

Nous expliquons tout d'abord une motivation pour ces travaux : obtenir des th\'eor\`emes de r\'eduction stable en dimension sup\'erieure (th\'eor\`emes~\ref{redstable} et~\ref{redstabledimsup}). Nous d\'efinissons ensuite les vari\'et\'es stables qui jouent dans ce cadre le r\^ole des courbes stables de Deligne et Mumford, et \'enon\c{c}ons le th\'eor\`eme d'existence des espaces de modules de vari\'et\'es stables (th\'eor\`eme~\ref{thedm}). Dans les troisi\`eme et quatri\`eme sections, nous esquissons enfin la preuve du th\'eor\`eme de r\'eduction stable et la construction de ces espaces de modules.

\medskip

 {\it Conventions.} Tous les sch\'emas sont des $\Q$-sch\'emas noeth\'eriens. Une vari\'et\'e est un sch\'ema s\'epar\'e de type fini sur un corps $k$ de caract\'eristique nulle, par exemple le corps $\C$ des nombres complexes.

\section{R\'eduction semi-stable et r\'eduction stable}

  Fixons dans cette section un morphisme propre et surjectif $f:\mathcal{X}\to B$ entre vari\'et\'es r\'eduites.
 Supposons $B$ int\`egre, et notons $\eta$ le point g\'en\'erique de $B$ et $\mathcal{X}_{\eta}$ la fibre g\'en\'erique de $f$.
  On voit $f$ comme une famille de vari\'et\'es alg\'ebriques param\'etr\'ee par les points de $B$.
Cette famille peut avoir de mauvaises propri\'et\'es : les fibres de $f$ peuvent ne pas toutes avoir la m\^eme dimension, \^etre tr\`es singuli\`eres... On est ainsi amen\'e \`a rechercher des mod\`eles birationnels $f':\mathcal{X}'\to B'$ de $f$ dont la g\'eom\'etrie et les singularit\'es sont contr\^ol\'ees.
Plus pr\'ecis\'ement, on recherche un diagramme commutatif:
\begin{equation}
\label{diagred}
\begin{aligned}
\xymatrix{
 \mathcal{X}'\ar_{f'}[dr]\ar@{-->}^{\phi}[r]
&  \mathcal{X}_{B'}\ar[r]\ar[d] & \mathcal{X} \ar^f[d] \\
&B'\ar^{\pi}[r]&B .
}
\end{aligned}
\end{equation}
dans lequel $B'$ est une vari\'et\'e int\`egre de point g\'en\'erique $\eta'$, le morphisme
$\pi:B'\to B$ est propre, g\'en\'eriquement fini et surjectif, le carr\'e est cart\'esien, $\phi_{\eta'}$ est birationnelle et $f'$ est propre.
Quelles propri\'et\'es peut-on alors imposer au morphisme $f'$ ?

\subsection{R\'eduction semi-stable}

Une premi\`ere r\'eponse est apport\'ee par le th\'eor\`eme de r\'eduction semi-stable de Kempf, Knudsen, Mumford et Saint-Donat~\cite[p.~53]{KKMS}.

\begin{theo}
\label{KKMS}
Si $\dim(B)=1$, on peut choisir $f':\mathcal{X}'\to B'$ comme dans~\eqref{diagred} de sorte que $B'$ et $\mathcal{X}'$ soient lisses et les fibres de $f'$ soient des diviseurs r\'eduits \`a croisements normaux stricts dans $\mathcal{X}'$.
\end{theo}

 L'assertion que les fibres sont r\'eduites (c'est-\`a-dire sans multiplicit\'es) est ici essentielle.
Quand la base a dimension arbitraire, on dispose encore d'un th\'eor\`eme de r\'eduction semi-stable, d\'emontr\'e dans une variante faible par Abramovich et Karu~\cite{AbraKaru} et en toute g\'en\'eralit\'e par Adiprasito, Liu et Temkin~\cite{ALT}.

\begin{theo}
On peut choisir $f':\mathcal{X}'\to B'$ comme dans~\eqref{diagred} de sorte que $B'$ et $\sX'$ soient lisses, et $f'$ soit plat \`a fibres r\'eduites.
\end{theo}
Les \'enonc\'es de~\cite{AbraKaru, ALT} sont plus pr\'ecis: on peut garantir que $f'$ soit munie d'une structure toro\"idale.
On en d\'eduit par exemple que les fibres de $f'$ sont Gorenstein~\cite[Proposition 6.5]{AbraKaru}.

Les th\'eor\`emes de r\'eduction semi-stable ci-dessus ont l'avantage de donner lieu \`a des familles $f':\mathcal{X}'\to B'$ dont l'espace total $\mathcal{X}'$ est lisse. Ils ont cependant plusieurs inconv\'enients. Ils sont fortement non uniques. Par exemple, dans le cadre du th\'eor\`eme~\ref{KKMS}, on peut sans dommage \'eclater un point de $\mathcal{X}'$ en lequel $f'$ est lisse. De cette mani\`ere, m\^eme si le morphisme $f$ est lisse (si $\mathcal{X}_{\eta}$ a \textit{bonne r\'eduction}), il se peut que $f'$ ne le soit pas. Ainsi, si les singularit\'es des fibres sont tr\`es contr\^ol\'ees, leur g\'eom\'etrie ne l'est pas du tout. Les th\'eor\`emes de r\'eduction stable apportent une solution \`a ce probl\`eme.

\subsection{R\'eduction stable pour les familles de courbes}

Le premier tel \'enonc\'e, pour les familles \`a un param\`etre de courbes, est d\^u \`a Deligne et Mumford~\cite{DM} (d'autres preuves ont \'et\'e donn\'ees, par exemple dans~\cite{AW,Temkin}).

\begin{defi}
\label{defcourbestable}
Une \textbf{courbe stable} est une vari\'et\'e projective connexe $C$ de dimension $1$ dont les singularit\'es sont au plus nodales et dont le faisceau dualisant $\omega_C$ est ample. Le genre de $C$ est l'entier $g(C)=h^0(C,\omega_C)$.
\end{defi}

\begin{theo}
\label{redstcourbes}
Si $\dim(B)=1$ et si $\mathcal{X}_{\eta}$ est une courbe stable, on peut choisir $f':\mathcal{X}'\to B'$ comme dans~\eqref{diagred} de sorte que $f'$ soit plat \`a fibres des courbes stables, et $\phi_{\eta'}$ soit un isomorphisme.

De plus, si $B'$ est fix\'ee, un tel $f':\sX'\to B'$ est unique.
\end{theo}

Le th\'eor\`eme~\ref{redstcourbes} s'applique en particulier quand $\sX_{\eta}$ est une courbe lisse de genre $\geqslant 2$. \`A la diff\'erence du th\'eor\`eme~\ref{KKMS}, il
ne restreint pas les singularit\'es de $\sX'$.
La g\'eom\'etrie des fibres de $f'$ est en revanche tr\`es contrainte.

Les \'enonc\'es d'unicit\'e et d'existence dans le th\'eor\`eme~\ref{redstcourbes} refl\`etent la s\'eparation et la propret\'e de l'espace de modules des courbes stables $\overline{M}_g$ (et m\^eme, plus pr\'ecis\'ement,
du champ de modules $\overline{\sM}_g$ des courbes stables). La propret\'e de $\overline{\sM}_g$
 implique \`a son tour un th\'eor\`eme de r\'eduction stable sur des bases de dimension arbitraire.

\begin{theo}
\label{redstdJ}
Si $\mathcal{X}_{\eta}$ est une courbe stable, on peut choisir $f':\mathcal{X}'\to B'$ comme dans \eqref{diagred} de sorte que $f'$ soit plat \`a fibres des courbes stables, et $\phi_{\eta'}$ soit un isomorphisme.
\end{theo}

\begin{proof}[Preuve]
  Soit $g$ le genre de $\sX_{\eta}$.  Il n'existe pas de famille universelle
  de courbes stables sur l'espace de modules $\overline{M}_g$. Il r\'esulte en
  revanche du lemme de Chow pour les champs de Deligne-Mumford
 ~\cite[Th\'eor\`eme 16.6]{LMB}, appliqu\'e au champ de modules
  $\overline{\mathcal{M}}_g$ des courbes stables, qu'il existe une famille
  plate $p:\sC\to Z$ de courbes stables de genre~$g$ telle que le morphisme
  induit $Z\to\overline{\sM}_g$ soit fini
  et surjectif.  Remarquons que $Z$ est propre par propret\'e de
  $\overline{\sM}_g$.  La courbe stable $\sX_{\eta}$ induit un morphisme
  $\eta\to \overline{\sM}_g$. Notons $\eta'$~une composante irr\'eductible du
  produit fibr\'e $\eta\times_{\overline{\sM}_g} Z$.  Soient $\widetilde{B}$
  la normalisation de $B$ dans $\eta'$ et $B'\to \widetilde{B}$ une
  modification r\'esolvant les ind\'etermin\'ees de l'application rationnelle
  naturelle $\widetilde{B}\dashrightarrow Z$. Le morphisme $f':\sX' \to B'$
  construit en changeant de base $p:\sC\to Z$ par le morphisme $B'\to Z$ a les
  propri\'et\'es requises.
\end{proof}

\medskip

Le th\'eor\`eme~\ref{redstdJ}, appliqu\'e \`a une famille de courbes balayant une vari\'et\'e arbitraire,
est un outil crucial dans la preuve du th\'eor\`eme d'alt\'eration des singularit\'es de de Jong~\cite{dJ} (voir plus pr\'ecis\'ement~\cite[\S 2.24, \S 5.13]{dJ} ou~\cite[\S 3.2.3]{Berthelot}).

\subsection{R\'eduction stable en dimension sup\'erieure}

Nous d\'efinirons plus loin une notion de vari\'et\'e stable (d\'efinition~\ref{defstable}) et de famille de vari\'et\'es stables ou famille stable (d\'efinition~\ref{deffam})
en dimension sup\'erieure, permettant de g\'en\'eraliser les th\'eor\`emes~\ref{redstcourbes} et~\ref{redstdJ}.

Pour l'instant, disons seulement qu'une vari\'et\'e propre et lisse est stable si et seulement si son fibr\'e canonique est ample. C'est une condition bien plus restrictive pour les vari\'et\'es de dimension $\geqslant 2$ que pour les courbes. Par exemple, les th\'eor\`emes ci-dessous ne s'appliquent pas aux familles de 
vari\'et\'es de Fano, de vari\'et\'es ab\'eliennes ou de surfaces~$K3$.

\begin{theo}
\label{redstable}
Si $\dim(B)=1$ et si $\mathcal{X}_{\eta}$ est une vari\'et\'e stable, il existe $f':\mathcal{X}'\to B'$ comme dans \eqref{diagred} tel que $f'$ soit une famille stable, et $\phi_{\eta'}$ soit un isomorphisme.

De plus, si $B'$ est fix\'ee, un tel $f':\sX'\to B'$ est unique.
\end{theo}

Ce th\'eor\`eme est d\^u \`a Hacon et Xu~\cite{HX} quand $\sX_{\eta}$ est normale et \`a Koll\'ar en g\'en\'eral~\cite{Kosing, Kobook} (voir \S\ref{secredst} pour plus de d\'etails).

Comme dans le cas des courbes, une cons\'equence g\'eom\'etrique du th\'eor\`eme~\ref{redstable} est la propret\'e des espaces de modules de vari\'et\'es stables (voir le th\'eor\`eme~\ref{thedm}). Une fois de tels espaces de modules construits (ce qui est significativement plus dur que pour les espaces de modules de courbes, comme on le verra au \S~\ref{consedm}), l'argument expliqu\'e dans la preuve du th\'eor\`eme
~\ref{redstdJ} permet d'obtenir un th\'eor\`eme de r\'eduction stable sur une base de dimension sup\'erieure. 

\begin{theo}
\label{redstabledimsup}
Si $\mathcal{X}_{\eta}$ est une vari\'et\'e stable, il existe $f':\mathcal{X}'\to B'$ comme dans \eqref{diagred} tel que $f'$ soit une famille stable, et $\phi_{\eta'}$ soit un isomorphisme.
\end{theo}

\section{Stabilit\'e}

Dans cette section, nous d\'efinissons et \'etudions les analogues en dimension sup\'erieure des courbes stables de Deligne et Mumford.

\subsection{Vari\'et\'es stables}

On peut penser aux courbes lisses de genre $g\geqslant 2$ qui ne sont pas hyperelliptiques comme plong\'ees, \`a l'aide de leur fibr\'e canonique, dans l'espace projectif $\bP_k^{g-1}$. Si l'on veut aussi prendre en compte les courbes hyperelliptiques, il faut plut\^ot consid\'erer leur plongement tricanonique dans $\bP_k^{5g-6}$. On voudra aussi penser aux vari\'et\'es stables de dimension sup\'erieure comme \'etant pluricanoniquement plong\'ees. Ce point de vue va impr\'egner toute la suite de ce texte. Il explique le r\^ole pr\'epond\'erant que vont jouer le faisceau canonique et ses puissances dans la d\'efinition des vari\'et\'es stables.

\subsubsection{Singularit\'es}

 Introduisons tout d'abord la classe des singularit\'es que ces vari\'et\'es stables pourront porter.

\begin{defi}
\label{defslc}
 Une vari\'et\'e $X$
est dite \`a \textbf{singularit\'es semi-log canoniques (slc)} si elle satisfait les conditions (i)-(v) suivantes.
\begin{enumerate}[(i)]
\item $X$ est r\'eduite et purement de dimension $d$,
\item $X$ est \`a croisements normaux doubles en codimension~1,
\item $X$ satisfait la condition $S_2$ de Serre,
\item il existe $m>0$ tel que $\omega_X^{[m]}$ soit inversible,
\item les discr\'epances des diviseurs au-dessus de $X$ sont $\geqslant -1$.
\end{enumerate}
 Si $X$ est de plus normale ou de mani\`ere \'equivalente par le crit\`ere de Serre, si $X$ v\'erifie :
\begin{enumerate}[$(i)'$]
\setcounter{enumi}{1}
\item X est r\'eguli\`ere en codimension $1$,
\end{enumerate}
on dit que $X$ est \`a \textbf{singularit\'es log canoniques (lc)}.
\end{defi}

Expliquons ces conditions. Que $X$ soit \`a croisements normaux doubles en codimension~1 signifie qu'il existe un ouvert $U\subset X$ dont le compl\'ementaire a codimension $\geqslant 2$, le long duquel
 $X$ est soit r\'eguli\`ere, soit localement isomorphe (pour la topologie \'etale ou, si $k=\C$, pour la topologie analytique) \`a la singularit\'e $\{xy=0\}\subset \mathbb{A}_k^{d+1}$. Qu'il soit n\'ecessaire d'autoriser de telles singularit\'es est d\'ej\`a apparent dans le cas des courbes stables.

La condition $S_2$ de Serre est la propri\'et\'e de Hartogs: elle stipule que les fonctions r\'eguli\`eres sur $X$ s'\'etendent au travers des ferm\'es $Z\subset X$ de codimension $\geqslant 2$. Plus pr\'ecis\'ement, si
$Z$ est un tel ferm\'e et si $j:X\setminus Z\hookrightarrow X$ est l'inclusion,
 le morphisme naturel $\sO_X\to j_*\sO_{X\setminus Z}$ est un isomorphisme. C'est un substitut de la normalit\'e de $X$.

Les vari\'et\'es stables doivent \^etre pens\'ees comme (pluri)canoniquement plong\'ees et il est donc important de contr\^oler les formes diff\'erentielles de degr\'e maximal sur $X$. C'est le r\^ole des conditions (iv) et (v). Notons $j:U\hookrightarrow X$ le plus gros ouvert le long duquel les singularit\'es de $X$ sont \`a croisements normaux doubles.
Comme les croisements normaux doubles sont des singularit\'es localement d'intersection compl\`ete, donc Gorenstein, le faisceau dualisant $\omega_U$ de $U$ est un faisceau inversible\footnote{Sur l'ouvert de lissit\'e de $X$, il s'agit du faisceau canonique des formes diff\'erentielles de degr\'e maximal. On peut d\'ecrire tr\`es concr\`etement $\omega_U$ en g\'en\'eral: une section locale est une $d$-forme diff\'erentielle sur la normalisation, \`a p\^oles au plus logarithmiques le long de l'image inverse du lieu double, et dont les r\'esidus le long des deux branches du lieu double sont oppos\'es~\cite[Proposition 5.8]{Kosing}.\label{fn:repeat}}. 
On d\'efinit le \textbf{faisceau canonique}\footnote{On prendra garde que, $X$ n'\'etant pas Cohen-Macaulay en g\'en\'eral, ce faisceau peut ne pas co\"incider avec le complexe dualisant de $X$ : il n'en est qu'un des faisceaux de cohomologie.} de $X$ par $\omega_X:=j_*\omega_U$ et on introduit, pour tout $n\in\Z$, ses puissances r\'eflexives $\omega_X^{[n]}:=j_*(\omega_U^{\otimes n})$ : les \textbf{faisceaux pluricanoniques} de $X$.
Qu'il existe un entier $m>0$ tel que $\omega_X^{[m]}$ soit un faisceau inversible, donc associ\'e \`a un fibr\'e en droites, est bien s\^ur une condition n\'ecessaire \`a toute tentative de voir $X$ comme plong\'ee \`a l'aide de formes pluricanoniques ! 

La condition (v) donne un contr\^ole birationnel sur les formes pluricanoniques sur $X$. Soit $\pi:Y\to X$ une modification normale\footnote{Une \textbf{modification} est un morphisme propre birationnel. On n'a pas vraiment besoin de supposer $Y$ normale : il suffit que $Y$ soit $S_2$ et r\'eguli\`ere aux points g\'en\'eriques des diviseurs exceptionnels de $\pi$.}  de $X$ (par exemple la normalisation de $X$ ou une r\'esolution des singularit\'es de $X$), et soient $(E_i)_{i\in I}$ les diviseurs exceptionnels de~$\pi$. Soit $m>0$ un entier tel que $\omega_X^{[m]}$ soit inversible. Au-dessus du lieu $Y\setminus \bigcup_i E_i$ o\`u $\pi$ est un isomorphisme, on dispose d'un isomorphisme \'evident $\rho:\omega^{[m]}_{Y\setminus \cup_i E_i}\isoto (\pi^*\omega_X^{[m]})|_{Y\setminus \cup_i E_i}$. Comme $\omega^{[m]}_{Y}$ et $\pi^*\omega_X^{[m]}$ sont inversibles au point g\'en\'erique de chacun des $E_i$, le morphisme~$\rho$ a des z\'eros ou des p\^oles d'une certaine multiplicit\'e le long de ces diviseurs, de sorte qu'il existe des $a_{E_i}(X)\in\frac{1}{m}\Z$ tels que $\rho$ se prolonge en un isomorphisme
\begin{equation}
\label{discrepeq}
\rho:\omega^{[m]}_{Y}\isoto \pi^*\omega_X^{[m]}(\sum_i m \cdot a_{E_i}(X)E_i).
\end{equation}
Les nombres rationnels $a_{E_i}(X)$ ont \'et\'e choisis pour ne pas d\'ependre du choix de l'entier~$m$ : ce sont les \textbf{discr\'epances} des diviseurs $E_i$. 

La condition (v) selon laquelle ces discr\'epances sont toujours $\geqslant -1$ signifie en substance
 que les formes canoniques sur $X$ s'\'etendent en des formes \`a p\^oles au plus logarithmiques sur les modifications de $X$. Il suffit de la v\'erifier pour les diviseurs apparaissant sur une r\'esolution arbitraire des singularit\'es de $X$ dont le diviseur exceptionnel est \`a croisements normaux stricts (combiner~\cite[Lemma 5.10 et Corollary 2.13]{Kosing}).
C'est la condition la plus subtile de la d\'efinition~\ref{defslc}.
La preuve transparente de l'unicit\'e dans le th\'eor\`eme de r\'eduction stable au \S~\ref{unicite} permet de se convaincre de sa pertinence.

\medskip

On d\'efinit d'autres classes de singularit\'es en conservant les conditions
(i)-(iv), mais en demandant \`a ce que les discr\'epances des diviseurs
au-dessus de $X$ soient $>-1$ (resp.{} $\geqslant 0$, resp.{} $>0$) : ce sont
les singularit\'es \textbf{kawamata log terminales} ou \textbf {klt} (resp.{}
\textbf{canoniques}, resp.{} \textbf{terminales}). Ces singularit\'es sont normales. Nous nous en servirons peu.

Ces d\'efinitions s'\'etendent sans difficult\'es \`a des sch\'emas plus g\'en\'eraux que des vari\'et\'es. Nous les utiliserons par exemple pour des sch\'emas de type fini sur le spectre d'un anneau de valuation discr\`ete au \S\ref{secredst} et au \S\ref{ouvertslc}.

\subsubsection{D\'efinition}

La notion de stabilit\'e combine les propri\'et\'es locales discut\'ees ci-dessus et une condition globale d'amplitude du faisceau canonique.
\begin{defi}
\label{defstable}
Une \textbf{vari\'et\'e stable} est une vari\'et\'e projective
$X$ \`a singularit\'es slc dont le faisceau canonique $\omega_X$ est ample.
\end{defi}

Le faisceau $\omega_X$ n'est pas inversible en g\'en\'eral. La d\'efinition~\ref{defstable} requiert seulement qu'il soit ample comme $\Q$-fibr\'e en droites, c'est-\`a-dire que $\omega_X^{[m]}$ soit ample pour un $m>0$ (de mani\`ere \'equivalente, pour tout $m>0$) tel que $\omega_X^{[m]}$ soit inversible.

Les courbes stables sont traditionnellement suppos\'ees connexes, comme dans la d\'efinition~\ref{defcourbestable}. Il est plus naturel de ne pas faire cette hypoth\`ese (voir par exemple le th\'eor\`eme~\ref{bij}).
La d\'efinition des vari\'et\'es stables dans le cas des surfaces avait \'et\'e d\'egag\'ee par Koll\'ar et Shepherd-Barron~\cite[\S 5.4]{KSB} et la d\'efinition en dimension arbitraire en est une extension imm\'ediate. En revanche, l'\'etude de ces vari\'et\'es est bien plus difficile en dimension $\geqslant 3$ qu'en dimension $2$.

\subsubsection{Cas des paires}
\label{paires}

Nous utiliserons la variante suivante des d\'efinitions~\ref{defslc} et~\ref{defstable}. Une \textbf{paire} $(X,\Delta)$ est constitu\'ee d'une vari\'et\'e $X$ et d'un $\Q$-diviseur de Weil
$\Delta=\sum c_i\Delta_i$, o\`u les $\Delta_i$ sont des sous-vari\'et\'es int\`egres de codimension $1$ de $X$ non incluses dans le lieu singulier de $X$ et o\`u $c_i\in\Q$ (dans la pratique, on aura m\^eme $c_i\in\Q\cap[0,1]$). On \'etend les d\'efinitions \`a ce cadre en rempla\c{c}ant partout $\omega_X$ par le faisceau canonique $\omega_X(\Delta)$ de la paire.

\begin{defi}
\label{slcpaires}
La paire $(X,\Delta)$ est \`a \textbf{singularit\'es slc} (resp. \textbf{lc}) si $c_i\in\Q\cap[0,1]$, si $X$ est r\'eduite, purement de dimension $d$, \`a croisements normaux doubles (resp. r\'eguli\`ere) en codimension $1$ et $S_2$, s'il existe un entier $m>0$ tel que $\omega_X^{[m]}(m\Delta)$ est inversible et si les discr\'epances $a_E(X,\Delta)$ des diviseurs $E$ au-dessus de $X$ sont $\geqslant -1$.

Elle est \textbf{stable} si elle est \`a singularit\'es slc si $X$ est projective et si $\omega_X(\Delta)$ est ample.
\end{defi}

Dans cette d\'efinition, les hypoth\`eses faites sur $X$ assurent l'existence d'un ouvert $j:U\hookrightarrow X$ dont le compl\'ementaire a codimension $\geqslant 2$ le long duquel $X$ est Gorenstein et les $\Delta_i$ sont Cartier. Si $n\in \Z$ est tel que les $nc_i$ sont des entiers, cela permet de d\'efinir le faisceau $n$-canonique $\omega_X^{[n]}(n\Delta):=j_*(\omega_U^{\otimes n}(n\Delta|_U))$ de $(X,\Delta)$.
Les discr\'epances $a_{E}(X,\Delta)$ sont calcul\'ees par rapport au faisceau canonique $\omega_X(\Delta)$ de la paire. Si $\pi:Y\to X$ est une modification normale de $X$ avec diviseurs exceptionnels $E_i$, si $(\pi^{-1})_*\Delta$
est la transform\'ee stricte de $\Delta$ dans $Y$, et si $m>0$ est tel que $\omega_X^{[m]}(m\Delta)$ est inversible, elles sont d\'efinies par l'isomorphisme naturel g\'en\'eralisant~\eqref{discrepeq} :
\begin{equation}
\label{logdiscrepeq}\omega^{[m]}_{Y}(m(\pi^{-1})_*\Delta)\isoto \pi^*\omega_X^{[m]}(m\Delta)\Big(\sum_i m \cdot a_{E_i}(X,\Delta)E_i\Big).
\end{equation}

Nous aurons \`a consid\'erer des paires pour plusieurs raisons ; la principale est la suivante. Soit $X$ une vari\'et\'e satisfaisant aux conditions (i)-(iv) de la d\'efinition~\ref{defslc}. Soit $m>0$ tel que $\omega^{[m]}_X$ soit inversible et soit $\nu:\widetilde{X}\to X$ la normalisation de $X$. 
Notons $\Gamma\subset\widetilde{X}$ le lieu exceptionnel de $\nu$, muni de sa structure r\'eduite. C'est un diviseur qui est l'adh\'erence de l'image inverse par $\nu$ du lieu o\`u $X$ est \`a croisements normaux doubles. On appelle $\Gamma$ le \textbf{conducteur} de $X$. L'isomorphisme \'evident $(\nu^*\omega_X^{[m]})|_{\widetilde{X}\setminus\Gamma}\isoto \omega_{\widetilde{X}}^{[m]}|_{\widetilde{X}\setminus\Gamma}$ se prolonge en un isomorphisme
\begin{equation}
\label{conducteur}
\nu^*\omega_X^{[m]}\isoto\omega_{\widetilde{X}}^{[m]}(m\Gamma),
\end{equation}
comme le montre un calcul local sur le lieu o\`u $X$ est \`a croisements
normaux doubles~\cite[(5.7.4)]{Kosing}. On d\'eduit imm\'ediatement de
l'isomorphisme~\eqref{conducteur} l'\'equivalence~\cite[Lemma 5.10]{Kosing} :
\begin{equation}
\label{slclc}
X\textrm{ est \`a singularit\'es slc }\iff (\widetilde{X},\Gamma)\textrm{ est \`a singularit\'es lc.}
\end{equation}
Ce proc\'ed\'e de normalisation permettra 
de ramener l'\'etude des vari\'et\'es \`a singularit\'es slc au cas normal.
Comprendre dans quelle mesure on peut reconstruire $X$ \`a partir de $(\widetilde{X},\Gamma)$ est une question difficile (voir le th\'eor\`eme~\ref{bij} pour un \'enonc\'e pr\'ecis).

\subsubsection{Exemples} 
\label{exslc}
Les seules singularit\'es slc de dimension $1$ sont les n\oe{}uds.

En dimension $2$, les singularit\'es slc ont \'et\'e classifi\'ees  par Kawamata~\cite[Theorem~2]{Kawsing} dans le cas normal et par Koll\'ar et Shepherd-Barron~\cite[Theorem 4.24]{KSB} en g\'en\'eral (voir aussi~\cite[\S 3.3]{Kosing} ou~\cite{Kobook}). Sans rappeler cette classification en d\'etail, donnons quelques exemples repr\'esentatifs. Les singularit\'es obtenues comme quotient de $\bA_k^2$ par un sous-groupe fini de $\GL_2(k)$ sont lc. Cela inclut toutes les singularit\'es Du Val (ou points doubles rationnels). D'autres singularit\'es lc de surface sont les singularit\'es elliptiques obtenues comme c\^ones sur une courbe elliptique.

Des exemples de surfaces slc non normales sont les points \`a croisement normaux triples $\{xyz=0\}\subset \bA_k^3$, le parapluie de Whitney ou pinch point $\{x^2=yz^2\}\subset \bA_k^3$, ou un c\^one sur une courbe elliptique nodale $\{y^2=x^3+x^2\}\subset \bA_k^3$.

On ne dispose pas de classification en dimension sup\'erieure. Les c\^ones
\begin{equation}
\label{cone}
C(X,L):=\Spec\bigoplus_{l\geqslant 0} H^0(X,L^{\otimes l})
\end{equation}
o\`u $X$ est une vari\'et\'e projective munie d'un fibr\'e ample $L$, fournissent une instructive source d'exemples. 
On calcule que $C(X,L)$ a des singularit\'es slc (resp. lc) si et seulement si $X$ a des singularit\'es slc (resp. lc) et s'il existe des entiers $m<0$ et $l\geqslant 0$ tels que $\omega_X^{[m]}\simeq L^{\otimes l}$~\cite[\S 3.1]{Kosing}. En particulier, le c\^one anticanonique sur une vari\'et\'e de Fano, ou un c\^one associ\'e \`a un fibr\'e ample arbitraire sur une vari\'et\'e de Calabi-Yau, ont des singularit\'es lc.

D'autres exemples \'el\'ementaires sont les singularit\'es quotient, c'est-\`a-dire les quotients de vari\'et\'es lisses\footnote{Il est faux en g\'en\'eral que le quotient d'une vari\'et\'e lc par un groupe fini est encore lc. Soit $\pi:S\to T$ une surface $K3$ obtenue comme rev\^etement double de 
$T=\bP^1\times\bP^1$ ramifi\'e au-dessus d'un diviseur lisse de bidegr\'e $(4,4)$, et notons $L:=\sO_T(1,2)$. Le morphisme de c\^ones $C(S,\pi^*L)\to C(T,L)$ est le quotient par une action de $\Z/2\Z$, mais $C(S,\pi^*L)$ est lc alors que $C(T,L)$ ne l'est pas.} par l'action d'un groupe fini~\cite[3.18]{Kosing}. Des exemples plus riches, \`a la topologie plus compliqu\'ee, ont \'et\'e construits par Koll\'ar~\cite{Konew}.

\subsection{Familles stables}

\subsubsection{D\'efinition} 
\label{pardeffamst}
Comme on le verra au \S\ref{exfam}, les familles plates \`a fibres slc (resp. stables) ne donnent pas lieu \`a une bonne notion de famille de vari\'et\'es slc (resp. stables). La raison pour cela est que, si l'on souhaite penser aux vari\'et\'es stables comme \'etant pluricanoniquement plong\'ees, il est important que les faisceaux (pluri)canoniques des fibres varient convenablement en famille ; c'est une condition que l'on doit imposer.

\begin{defi}
\label{deffam}
Une \textbf{famille localement stable} est un morphisme plat \`a fibres slc $f:\mathcal{X}\to B$  tel que pour tout $n\in\Z$, le faisceau $\omega_{\mathcal{X}/B}^{[n]}$ soit $f$-plat de formation commutant \`a tout changement de base.
C'est une \textbf{famille de vari\'et\'es stables} ou \textbf{famille stable} si $f$ est de plus propre \`a fibres stables.\end{defi}

 Dans cette d\'efinition, les faisceaux pluricanoniques relatifs $\omega_{\mathcal{X}/B}^{[n]}$ sont construits comme dans le cas absolu. Plus pr\'ecis\'ement, on note $j:\mathcal{U}\hookrightarrow\mathcal{X}$ le plus gros ouvert 
 le long duquel les singularit\'es des fibres g\'eom\'etriques de $f$ sont \`a croisements normaux doubles. Le morphisme $f|_{\mathcal{U}}$ est plat \`a fibres Gorenstein,
de sorte que le faisceau dualisant relatif $\omega_{\mathcal{U}/B}$ est inversible~\cite[Theorem 3.5.1]{Conrad}. On pose $\omega_{\mathcal{X}/B}:=j_*\omega_{\mathcal{U}/B}$ et $\omega_{\mathcal{X}/B}^{[n]}:=j_*(\omega_{\mathcal{U}/B}^{\otimes n})$.

La d\'efinition~\ref{deffam} requiert tout d'abord que les faisceaux pluricanoniques relatifs $\omega_{\mathcal{X}/B}^{[n]}$ soient plats sur $B$. Cette hypoth\`ese ne suffit pas \`a assurer 
que les fibres $\omega_{\mathcal{X}/B}^{[n]}|_{\sX_b}$ de ces faisceaux au-dessus d'un point $b\in B$ co\" incident avec les faisceaux pluricanoniques $\omega_{\mathcal{X}_b}^{[n]}$ de la fibre. C'est le r\^ole de la condition  de changement de base dans la d\'efinition~\ref{deffam} : elle revient \`a imposer que le morphisme naturel 
$\omega_{\mathcal{X}/B}^{[n]}|_{\sX_b}\to \omega_{\mathcal{X}_b}^{[n]}$ soit un isomorphisme, pour tout $b\in B$ et tout $n\in\Z$. Ceci implique\footnote{Pour le voir, on peut combiner~\cite[Th\'eor\`eme 5.10.5 et Proposition 6.3.1]{EGA42}.} en effet la propri\'et\'e, a priori plus forte, de commutation \`a tout changement de base : pour tout morphisme $g:B'\to B$, si l'on note $g_{\sX}:\sX'\to\sX$ le changement de base, le morphisme naturel
$g_{\sX}^*\omega_{\mathcal{X}/B}^{[n]}\to \omega_{\mathcal{X}'/B'}^{[n]}$
est un isomorphisme.

Il suit de la d\'efinition~\ref{deffam} que si $f$ est localement stable, il existe $m>0$ tel que le faisceau $\omega^{[m]}_{\sX/B}$ soit inversible (et $f$-ample si $f$ est stable). En effet, par r\'ecurrence noeth\'erienne sur la base $B$, on peut choisir $m$ de sorte que $\omega_{\mathcal{X}_b}^{[m]}$ soit inversible pour tout $b\in B$. Il r\'esulte de sa platitude et de sa commutation au changement de base que $\omega^{[m]}_{\sX/B}$ est inversible (et $f$-ample si les fibres sont stables). Une famille stable est donc bien canoniquement polaris\'ee, comme d\'esir\'e.

Les conditions de platitude et de commutation au changement de base pour $\omega_{\mathcal{X}/B}^{[n]}$ sont subtiles. Elles sont automatiques pour $n=1$ par~\cite[Theorem 7.9.3]{lcdB} et~\cite[Corollary 6.32]{Kosing}. Elles sont toujours v\'erifi\'ees si les fibres de $f$ sont \`a singularit\'es canoniques\footnote{Justifions-le. Par classification des singularit\'es canoniques de surfaces~\cite[Theorem 4.20]{KM}, les fibres de $f$ sont Gorenstein en codimension $2$. Par~\cite[Theorem 3.5.1]{Conrad}, il existe un ouvert $j:\mathcal{U}\hookrightarrow\mathcal{X}$ 
tel que $\sX\setminus\sU$ a codimension $\geqslant 3$ dans les fibres de $f$ et tel que $f|_{\sU}$ est Gorenstein, de sorte que $\omega_{\sU/B}$ est inversible. Comme les fibres de $f$ sont de plus $S_3$ par~\cite{Elkik} (voir le th\'eor\`eme~\ref{rationnelles}), on peut conclure \`a l'aide de~\cite[Theorem 12]{Koflat}.} (voir~\cite[Aside 30]{Kosurvey}).
Enfin, quand la base $B$ est r\'eduite, on dispose d'un crit\`ere num\'erique : il est \'equivalent de demander que le degr\'e de la polarisation canonique des fibres soit localement constant sur la base~\cite
{Kobook}.

  Dans la d\'efinition~\ref{deffam}, la condition de commutation de tous les $\omega_{\mathcal{X}/B}^{[n]}$ aux changements de base est connue sous le nom de \textit{condition de Koll\'ar}.
   Une variante, dite \textit{condition de Viehweg}~\cite[Assumptions 8.30]{Viehweg}, consiste \`a demander que $\omega_{\mathcal{X}/B}^{[m]}$ soit inversible (et par cons\'equent commute aux changements de base) seulement pour un $m>0$.
Elle permet \'egalement de construire des espaces de modules projectifs de vari\'et\'es stables ; ils diff\`erent par leur structure sch\'ematique de ceux
obtenus \`a l'aide de la condition de Koll\'ar (voir~\cite{Kodef}).

\subsubsection{Exemples} 
\label{exfam}
Illustrons, en suivant~\cite[7.A]{YPG} et~\cite[Example 26]{Kosurvey}, l'importance de la condition de changement de base dans la D\'efinition~\ref{deffam}.

 Consid\'erons d'une part le plongement de Veronese $\Sigma_1=\bP^2_{k}\hookrightarrow\bP^{5}_{k}$ et d'autre part le plongement $\Sigma_2=\bP^1_{k}\times\bP^1_{k}\hookrightarrow\bP^5_{k}$ induit par $\sO(1,2)$.  Ces deux surfaces projectives ont pour sections hyperplanes lisses des courbes rationnelles normales quartiques $\Gamma$. Pour $i\in\{1,2\}$, soit $f_i:\sX_i\to\bP^1_{k}$ un pinceau g\'en\'eral de sections hyperplanes du c\^one $C(\Sigma_i,\sO(1))$ sur $\Sigma_i$. Toutes les fibres de $f_i$ sont isomorphes \`a $\Sigma_i$, sauf la section hyperplane passant par le sommet du c\^one, qui est isomorphe au c\^one $C$ sur la courbe rationnelle normale quartique\footnote{Cette section hyperplane pourrait a priori avoir un point immerg\'e au sommet. On v\'erifie que ce n'est pas le cas en remarquant que $\Sigma_i$ et $C$ ont m\^eme polyn\^ome de Hilbert.}. On voit ainsi que les fibres de $f_i$ ont des singularit\'es lc (voir \S~\ref{exslc}).
 
  On remarque cependant que les nombres d'intersection  $\omega_{\Sigma_1}\cdot\omega_{\Sigma_1}=9$ et $\omega_{\Sigma_2}\cdot\omega_{\Sigma_2}=8$ des surfaces $\Sigma_1$ et $\Sigma_2$ diff\`erent. Comme $f_1$ et $f_2$ ont une fibre sp\'eciale isomorphe \`a $C$  en commun, cette remarque n'est pas compatible avec le fait que les faisceaux dualisants relatifs de ces familles forment un $\Q$-fibr\'e en droites. Cela s'explique par le fait que, si $f_1$~est bien localement stable (en particulier, le faisceau $\omega^{[2]}_{\sX_1/B}$ est inversible\footnote{Dans ces exemples, l'unique singularit\'e de l'espace total est celle d'un c\^one. On v\'erifie alors ces assertions \`a l'aide du calcul du groupe des classes d'un c\^one~\cite[Proposition 3.14 (4)]{Kosing}.\label{note1}}), la famille~$f_2$ ne l'est pas ($\omega^{[n]}_{\sX_2/B}$ n'est inversible pour aucun $n>0$\textsuperscript{(\ref{note1})}).
  
  En rempla\c{c}ant les fibres des $f_i$ par des rev\^etements ramifi\'es appropri\'es, on obtient des exemples analogues pour lesquels $f_1$ est stable (et pas seulement localement stable).
 
 \medskip
 
 On construit un exemple un peu diff\'erent en suivant~\cite[Example 5.12]{KSB}. Effectuons la m\^eme construction \`a l'aide des deux surfaces $\Sigma'_1=\bP^1_{k}\times\bP^1_{k}\hookrightarrow\bP^8_{k}$ et $\Sigma'_2=\bP_{\bP^1_{k}}(\sO\oplus\sO(1))\hookrightarrow\bP^8_{k}$, plong\'ees par leur fibr\'e anticanonique, dont les sections hyperplanes lisses sont des courbes elliptiques octiques. Prenant, pour $i\in\{1,2\}$, un pinceau de sections hyperplanes du c\^one $C(\Sigma'_i,\sO(1))$ sur $\Sigma'_i$, on peut obtenir deux familles $f'_i:\sX'_i\to\bP^1_{k}$ dont les fibres g\'en\'erales sont toutes isomorphes \`a $\Sigma'_i$, sauf une qui est un c\^one sur une courbe elliptique octique fix\'ee. \`A la diff\'erence de l'exemple pr\'ec\'edent, les deux familles sont localement stables : on v\'erifie m\^eme que $\omega_{\sX^i/B}$ est inversible\textsuperscript{(\ref{note1})} pour $i\in\{1,2\}$.
 
 Comme ci-dessus, en rempla\c{c}ant les $f_i$ par des rev\^etements ramifi\'es bien choisis, on peut obtenir deux familles stables qui ont une fibre singuli\`ere en commun et dont les fibres g\'en\'erales, lisses, ne peuvent \^etre membres d'une m\^eme famille lisse de base irr\'eductible.
Il s'agit donc d'un exemple o\`u deux composantes irr\'eductibles distinctes de l'espace des modules des vari\'et\'es stables s'intersectent. Ce ph\'enom\`ene n'appara\^it pas en dimension $1$. Signalons que Horikawa~\cite[Theorem 3]{Horikawa} a construit de tels exemples pour lesquels la fibre sp\'eciale commune aux deux familles est de plus lisse.

\subsection{Espaces de modules de vari\'et\'es stables}

\subsubsection{Existence}
Nous pouvons \`a pr\'esent donner l'\'enonc\'e pr\'ecis d'existence de l'espace de modules des vari\'et\'es stables. La construction de l'espace de modules des courbes stables demandait de fixer le genre de ces courbes. En  dimension sup\'erieure, on doit aussi fixer un invariant discret : la fonction de Hilbert.

\begin{defi}
La \textbf{fonction de Hilbert} $F:\Z\to\Z$ d'une vari\'et\'e stable $X$ est 
$$F(n):=\chi(X,\omega_X^{[n]}).$$
\end{defi}

Comme $\omega_X$ n'est pas inversible en g\'en\'eral, la fonction de Hilbert de $X$ peut ne pas \^etre un polyn\^ome en $n$\footnote{\label{nonpol}Soit $L$ un fibr\'e en droites tr\`es ample sur une surface d'Enriques $S$ et soit $Z=C(S,L)$ le c\^one sur~$S$ dans le plongement induit par $L$. Soit $X\to S$ un rev\^etement double ramifi\'e le long d'une section lisse de $L^{\otimes 4}$. Nous laissons au lecteur le soin de v\'erifier que $X$ est stable et que sa fonction de Hilbert n'est pas un polyn\^ome.}.
L'hypoth\`ese de platitude dans la d\'efinition~\ref{deffam} montre que cet invariant est localement constant sur la base d'une famille stable.

\begin{theo}
\label{thedm}
Soit $F:\Z\to\Z$ une fonction. La cat\'egorie fibr\'ee en groupo\"ides 
\begin{equation}
\label{fonctpoints}
B\mapsto\{\hspace{.1em}\textrm{familles stables }f:\sX\to B\textrm{ dont les fibres ont fonction de Hilbert $F$}\}
\end{equation}
sur la cat\'egorie des $k$-sch\'emas est un champ de Deligne-Mumford $\overline{\mathcal{M}}_F$ propre sur $k$ admettant un espace de modules grossier projectif $\overline{M}_F$.
\end{theo}

La preuve de ce th\'eor\`eme, due \`a de nombreux auteurs, sera esquiss\'ee au \S\ref{consedm}.
Le lecteur qui ne serait pas familier avec les champs~\cite{LMB, Olsson} peut
ne retenir que la seconde partie de son \'enonc\'e. Elle signifie qu'il existe
une vari\'et\'e projective $\overline{M}_F$ sur $k$, et une mani\`ere
d'associer \`a toute famille stable $f:\sX\to B$ dont les fibres ont fonction de Hilbert~$F$
 un morphisme $\psi(f):B\to \overline{M}_F$, qui soit fonctorielle en $B$, de sorte que $(\overline{M}_F,\psi)$ soit universel pour cette propri\'et\'e, et induise une bijection
$$
\left\{
\begin{array}{c}
\mbox{classes d'isomorphisme de vari\'et\'es}\\
\mbox{stables sur $K$ de fonction de Hilbert $F$}
\end{array}
\right\}
\isoto\overline{M}_F(K).
$$
pour toute extension alg\'ebriquement close $K$ de $k$.
Par exemple, le th\'eor\`eme~\ref{thedm} munit l'ensemble des classes d'isomorphisme de vari\'et\'es stables complexes de fonction de Hilbert $F$ d'une structure naturelle de vari\'et\'e projective complexe.

Insistons sur l'importance de la d\'efinition des singularit\'es slc pour la validit\'e de cet \'enonc\'e. Admettre une classe plus large de singularit\'es aurait nui au caract\`ere s\'epar\'e de~$\overline{M}_F$ ; restreindre les singularit\'es autoris\'ees aurait emp\^ech\'e sa propret\'e.

Dans le cas des surfaces, le th\'eor\`eme~\ref{thedm} est connu depuis longtemps, par des travaux de Koll\'ar, Shepherd-Barron et Alexeev~\cite{KSB, Kocomplete, Ale}, \`a deux subtilit\'es pr\`es. D'une part, une structure sch\'ematique sur $\overline{M}_F$ prenant en compte les fonctions nilpotentes, n'a \'et\'e construite rigoureusement que plus tard (voir~\cite{HaKo,KoHH, AbraHa} et \S\ref{ssHH}). D'autre part, la propret\'e des composantes irr\'eductibles de $\overline{M}_F$ param\'etrant g\'en\'eriquement des vari\'et\'es non normales n'a pu \^etre \'etablie que gr\^ace aux techniques de recollement de Koll\'ar 
(voir~\cite{Kosing, Kobook} et \S\ref{nonnormale}).

\subsubsection{G\'eom\'etrie}
La fonction de Hilbert $F(n)=(g-1)(2n-1)$ donne lieu au champ de modules $\overline{\mathcal{M}}_g$ des courbes stables de Deligne et Mumford~\cite{DM} et \`a son espace de modules grossier $\overline{M}_g$. Le champ $\overline{\sM}_g$ est lisse et irr\'eductible, de sorte que $\overline{M}_g$ est normal et irr\'eductible. On a vu \`a la fin du \S\ref{exfam} que ces propri\'et\'es tombaient en d\'efaut en dimension sup\'erieure. Vakil~\cite[Main Theorem M2]{Murphy} a m\^eme d\'emontr\'e que les singularit\'es des vari\'et\'es 
$\overline{M}_F$ peuvent \^etre arbitrairement mauvaises.

La g\'eom\'etrie de $\overline{M}_g$ est aujourd'hui bien comprise et fait l'objet d'une abondante litt\'erature.
A contrario, on dispose de tr\`es peu de descriptions concr\`etes d'espaces de modules non triviaux de vari\'et\'es stables en dimension sup\'erieure (\`a l'exception notable de l'espace de modules des produits de courbes stables~\cite{vOp}). 
On ne sait par exemple pas d\'ecrire l'adh\'erence de l'ouvert param\'etrant des surfaces quintiques dans~$\bP^3_k$~\cite{quintic1, quintic2}. On trouvera dans~\cite{Godeaux} l'\'etat de l'art dans le cas des surfaces de Godeaux.

Comme la fonction de Hilbert d'une vari\'et\'e stable lisse est polynomiale, les vari\'et\'es stables dont la fonction de Hilbert n'est pas polynomiale, comme dans la note de bas de page \eqref{nonpol}, donnent lieu \`a des composantes connexes de l'espace de modules qui ne param\`etrent que des vari\'et\'es singuli\`eres.

Soit enfin $M$ une composante connexe de $\overline{M}_F$. On sait que si l'une des vari\'et\'es que $M$ param\`etre
v\'erifie la condition $S_k$ de Serre, alors toutes ont cette propri\'et\'e~\cite[Corollary 1.3]{lcdB}. Par cons\'equent, si l'une d'entre elles est Cohen-Macaulay (par exemple : lisse), toutes sont Cohen-Macaulay. Il est malgr\'e tout utile de consid\'erer aussi des vari\'et\'es stables qui ne sont pas Cohen-Macaulay ; on en verra une raison au \S\ref{stableslc}.

\subsubsection{Variantes}

De nombreuses variantes des espaces de modules de vari\'et\'es stables sont utiles et ont \'et\'e \'etudi\'ees. Tout d'abord, il est naturel de consid\'erer plut\^ ot des espaces de modules de paires stables, qui g\'en\'eralisent en dimension sup\'erieure les espaces de modules de courbes stables point\'ees. Ce sujet a \'et\'e d\'evelopp\'e dans~\cite{Hassett, Hacking, Alelim, KoPa, Kopaires} et le livre~\cite{Kobook} en fait une \'etude approfondie.

Il est \'egalement int\'eressant de construire des espaces de modules de morphismes stables \`a valeurs dans une vari\'et\'e fix\'ee. Quand la source du morphisme est une courbe, ces espaces ont \'et\'e introduits par Kontsevich (voir~\cite{FuPa}), et on pourra consulter~\cite{Alemod, DR} en dimension sup\'erieure.

Les r\'esultats en caract\'eristique positive sont limit\'es. L'article~\cite{Pata} contient le meilleur \'enonc\'e connu : sur un corps de caract\'eristique $p\geqslant 7$, l'espace de modules des surfaces stables existe comme espace alg\'ebrique s\'epar\'e et ses sous-espaces propres sont projectifs.

\subsection{Outils pour l'\'etude des singularit\'es slc}

  Pour obtenir des compactifications modulaires $\overline{M}_F$ des espaces de modules de vari\'et\'es projectives lisses canoniquement polaris\'ees, nous avons d\^u autoriser des vari\'et\'es \`a singularit\'es slc. Que ce soit pour construire ces compactifications ou pour d'\'eventuelles applications de leur existence, il est important d'\'etudier cette classe de singularit\'es. Il s'av\`ere qu'elles ont des propri\'et\'es remarquables ; nous en d\'ecrivons ici quelques-unes.

\subsubsection{Adjonction}
\label{paradj}

Soit $(X,\Delta+B)$ une paire dans laquelle le diviseur de Weil $B$ est affect\'e d'un coefficient~$1$. On suppose que $X$ est normale, purement de dimension~$d$, et qu'il existe un entier $m>0$ tel que $\omega_X^{[m]}(m\Delta+mB)$ est inversible.
Consid\'erons la normalisation $\nu:\widetilde{B}\to B$ de $B$
et soit $U\subset X$ le plus gros ouvert disjoint de $\Delta$ le long duquel $X$ et $B$ sont tous deux r\'eguliers. L'isomorphisme canonique $\omega_X(B)|_{B\cap U}\isoto \omega_{B\cap U}$ donn\'e par le r\'esidu des formes diff\'erentielles induit un isomorphisme
\begin{equation}
\label{differente}
\omega_X^{[m]}(m\Delta+mB)|_{\widetilde{B}}\isoto\omega_{\widetilde{B}}^{[m]}(m\Diff_{\widetilde{B}}(\Delta)),
\end{equation}
o\`u $\Diff_{\widetilde{B}}(\Delta)$ est un $\Q$-diviseur de Weil  sur $\widetilde{B}$ uniquement d\'etermin\'e et ind\'ependant de~$m$ : c'est la \textbf{diff\'erente} de $\Delta$ sur $\widetilde{B}$ (voir~\cite[Definition 4.2]{Kosing}). 

Dans de nombreuses situations, par exemple dans le cadre d'une r\'ecurrence sur la dimension, il est utile de ramener l'\'etude de $(X,\Delta+B)$ \`a celle de $(\widetilde{B},\Diff_{\widetilde{B}}(\Delta))$. Le th\'eor\`eme~\ref{invadj}, d\^u \`a Kawakita~\cite{Kawakita}, et qui fait suite \`a des travaux de Shokurov~\cite{Shokurov} et de Koll\'ar~\cite[\S 17]{Koflips}, est un outil pr\'ecieux pour ce type d'arguments.

\begin{theo}
\label{invadj}
Les conditions suivantes sont \'equivalentes:
\begin{enumerate}[(i)]
\item La paire $(X,\Delta+B)$ est lc dans un voisinage de $B$.
\item La paire $(\widetilde{B}, \Diff_{\widetilde{B}}(\Delta))$ est lc.
\end{enumerate}
\end{theo}

L'implication (i)$\implies$(ii), dite \textit{adjonction}, est facile. C'est l'implication r\'eciproque (ii)$\implies $(i), dite \textit{inversion de l'adjonction}, qui est d\'elicate. Sa preuve repose de mani\`ere essentielle sur le th\'eor\`eme d'annulation de Kawamata-Viehweg.

Par le biais de l'\'equivalence \eqref{slclc}, on peut d\'eduire du th\'eor\`eme~\ref{invadj} des \'enonc\'es portant sur les singularit\'es slc (voir~\cite[Lemma 2.10, Corollary 2.11]{Patakfibre}).

\subsubsection{Propri\'et\'es cohomologiques} La premi\` ere indication que les classes de singularit\'es que nous consid\'erons ont de bonnes propri\'et\'es cohomologiques a \'et\'e le th\'eor\`eme d'Elkik~\cite{Elkik} selon lequel les singularit\'es canoniques sont rationnelles. Ce r\'esultat reste valide plus g\'en\'eralement pour les singularit\'es klt~\cite[Theorem 5.22]{KM}.

\begin{theo}
\label{rationnelles}
Les singularit\'es klt sont rationnelles.
\end{theo}

On en d\'eduit que les singularit\'es klt sont Cohen-Macaulay~\cite[Theorem 5.10]{KM}.

Malheureusement, les singularit\'es lc ne sont pas toujours rationnelles, ni m\^eme Cohen-Macaulay. Par exemple, un c\^one sur une surface ab\'elienne est lc mais pas $S_3$~\cite[Example 3.6]{Kosing}. Il est donc n\'ecessaire de trouver un substitut \`a la rationalit\'e, qui s'applique aux vari\'et\'es lc (ou plus g\'en\'eralement slc). 
Koll\'ar et Kov\'acs ont montr\'e que les singularit\'es Du Bois~\cite[\S 6]{Kosing} remplissent ce r\^ole (voir~\cite{lcdB},~\cite[\S 6.2]{Kosing}).

\begin{theo}
Les singularit\'es slc sont Du Bois.
\end{theo}

Une cons\'equence concr\`ete de cet \'enonc\'e est le fait que si $f:\sX\to B$ est une famille stable, les fonctions $b\mapsto h^i(\sX_b,\sO_{\sX_b})$ sont localement constantes sur $B$~\cite[Th\'eor\`eme~4.6]{DuBois}.
Nous n'utiliserons pas les singularit\'es Du Bois dans la suite de ce texte.

En revanche, nous devrons savoir contr\^oler pr\'ecis\'ement le d\'efaut de la propri\'et\'e $S_3$ des singularit\'es slc.
Nous utiliserons \`a cet effet un r\'esultat d'Alexeev~\cite[Lemma 3.2]{Alelim}, \'etendu dans~\cite[Theorem 7.20]{Kosing}. On dit qu'une sous-vari\'et\'e int\`egre d'une vari\'et\'e~$X$ \`a singularit\'es slc est un \textbf{centre log canonique} de $X$ si c'est l'image d'un diviseur au-dessus de $X$ dont la discr\'epance est \'egale \`a $-1$.

\begin{theo}
\label{S3}
Soit $X$ une vari\'et\'e slc. Si $x\in X$ n'est pas le point g\'en\'erique d'un centre log canonique de $X$, on a $\prof(\omega^{[n]}_{X,x})\geqslant \min(3, \dim(\sO_{X,x}))$ pour tout $n\in\Z$.
\end{theo}

Les cas $n=0$ et $n=1$ sont explicit\'es dans~\cite[Corollaries 7.21 and 7.22]{Kosing}, et le cas g\'en\'eral se prouve de la m\^eme mani\`ere\footnote{On travaille localement au voisinage de $x$ et on applique~\cite[Theorem 7.20]{Kosing} avec $\Delta=\Delta'=0$ \`a un diviseur de Weil $D\subset X$ tel que $\sO_X(-D)\simeq\omega^{[n]}_X$.}.

\section{Le th\'eor\`eme de r\'eduction stable}
\label{secredst}

Nous expliquons dans cette section la preuve du th\'eor\`eme~\ref{redstable}.
On en consid\`ere plut\^ot une variante locale sur un anneau de valuation discr\`ete $R$ de corps de fonctions $K$. On note $T=\Spec(R)$ son spectre, de point ferm\'e $t$ et de point g\'en\'erique $\eta$.

\begin{theo}
\label{redstable2}
Soit $X$ une vari\'et\'e stable sur $K$.  Il existe une extension finie d'anneaux de valuations discr\`etes $R\subset R'$  de corps de fonctions $K\subset K'$ et une famille stable
 $f:\mathcal{X}\to \Spec(R')$ telle que $\mathcal{X}_{K'}\simeq X_{K'}$. Si $R'$ est fix\'e, cette famille est unique.
 \end{theo}
Que le th\'eor\`eme~\ref{redstable2} implique le th\'eor\`eme de r\'eduction stable sous sa forme globale \'enonc\'ee au th\'eor\`eme~\ref{redstable} est standard.

\begin{proof}[Preuve du th\'eor\`eme~\ref{redstable}]
  Soit $f:\sX\to B$ un morphisme propre de base une courbe int\`egre de corps
  de fonctions $K$.  Si $\sX_{\eta}$ est stable, la famille $f$ est stable
  au-dessus d'un ouvert dense $U\subset B$ (par exemple, par les arguments des
  \S\S\ref{deminormal}--\ref{ouvertslc}). Pour tout $b\in B\setminus U$, le
  th\'eor\`eme~\ref{redstable2} appliqu\'e \`a l'anneau de valuation
  discr\`ete $R_b:=\sO_{B,b}$ fournit une extension finie $R'_b$ de $R_b$ de
  corps de fonctions $K'_b$ telle que $\sX_{K'_b}$ ait un mod\`ele stable sur
  $R'_b$. Soit $K'$ une extension galoisienne de $K$ dans laquelle tous les
  $K'_b$ se plongent et soit $\pi:B'\to B$ la normalisation de $B$ dans
  $K'$. Par construction, la vari\'et\'e $\sX_{K'}$ poss\`ede un mod\`ele
  stable au voisinage de tout point de $B'$. Ces mod\`eles locaux se recollent
  par unicit\'e.
\end{proof}

Le th\'eor\`eme~\ref{redstable2} est d\^u \`a Hacon et Xu~\cite{HX} et Koll\'ar~\cite{Kosing, Kobook}.
C'est ce th\'eor\`eme qui nous permettra de v\'erifier la propret\'e du champ de modules des vari\'et\'es stables (voir \S\ref{champ}). Les r\'esultats ant\'erieurs de Birkar, Cascini, Hacon et McKernan~\cite{BCHM} auraient cependant suffi \`a d\'emontrer la propret\'e des composantes irr\'eductibles de $\overline{\sM}_F$ qui param\`etrent g\'en\'eriquement des vari\'et\'es lisses.

\medskip

La preuve du th\'eor\`eme~\ref{redstable2} repose sur le point de vue selon lequel les vari\'et\'es stables doivent \^etre consid\'er\'ees comme pluricanoniquement plong\'ees. Plus pr\'ecis\'ement, si $X$ est une vari\'et\'e stable et si $m>0$ est tel que 
$\omega_X^{[m]}$ est inversible, on peut reconstruire~$X$ \`a partir de son alg\`ebre $m$-canonique par la formule $X\simeq\Proj\bigoplus_{l \geqslant 0}H^0(X,\omega_X^{[lm]})$. L'existence comme l'unicit\'e des familles stables dans le th\'eor\`eme~\ref{redstable2} seront obtenues par le biais de ces alg\`ebres $m$-canoniques.

\subsection{Unicit\'e}
\label{unicite}

  Montrons la propri\'et\'e d'unicit\'e dans le th\'eor\`eme~\ref{redstable2}. La preuve donn\'ee dans~\cite[Proposition 6]{Kosurvey} quand les $f_i$ sont lisses s'\'etend au cas g\'en\'eral~\cite{Kobook}. Commen\c{c}ons par d\'emontrer un lemme que nous utiliserons \`a plusieurs reprises.
 
\begin{lemm}
\label{XXt}
Soit $f:\sX\to T$ un morphisme propre et plat dont les fibres satisfont les conditions (i)-(iv) de la d\'efinition~\ref{defslc}. Soit $m>0$ un entier tel que $\omega_{\sX/T}^{[m]}$ soit inversible. Si la vari\'et\'e $\sX_t$ est \`a singularit\'es slc, la paire $(\sX,\sX_t)$ est \`a singularit\'es slc. 
\end{lemm}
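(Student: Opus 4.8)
The statement is local on $T = \Spec(R)$, so I work with $R$ a discrete valuation ring; the claim is that if $f:\sX\to T$ is proper and flat with fibres satisfying (i)--(iv) of Definition~\ref{defslc}, if $\omega_{\sX/T}^{[m]}$ is invertible, and if the special fibre $\sX_t$ is slc, then the pair $(\sX,\sX_t)$ is slc. The generic fibre $\sX_\eta$ will automatically be slc (it is a generization, and the discrepancy condition is inherited), so the issue is purely to check conditions (i)--(v) of Definition~\ref{slcpaires} for $(\sX,\sX_t)$. Conditions (i)--(iv) for $(\sX,\sX_t)$ follow readily from the corresponding hypotheses: $\sX$ is reduced (the total space of a flat family with reduced fibres over a DVR is reduced), it is $S_2$ because its fibres are $S_2$ and it is flat over the $1$-dimensional regular base $T$, it is double normal crossings in codimension $1$ along the open set where the fibres are (the special fibre $\sX_t$ being Cartier contributes a smooth branch there), and $\omega_\sX^{[m]}(m\sX_t) \cong \omega_{\sX/T}^{[m]}$ is invertible by the adjunction $\omega_\sX\cong\omega_{\sX/T}\otimes f^*\omega_T$ together with $\sO_\sX(\sX_t)\cong f^*\sO_T(t)$ and the triviality of $\omega_T(t)$ — this is the point where invertibility of $\omega_{\sX/T}^{[m]}$ is used, and also identifies the canonical sheaf of the pair with the relative pluricanonical sheaf.

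The heart of the matter is condition (v): the discrepancies $a_E(\sX,\sX_t)$ of all divisors $E$ over $\sX$ must be $\geqslant -1$. The natural strategy is \emph{inversion of adjonction}, i.e.\ Theorem~\ref{invadj}: one wants to deduce log canonicity of the pair $(\sX,\sX_t)$ in a neighbourhood of $\sX_t$ from log canonicity of the restriction to $\sX_t$. But Theorem~\ref{invadj} is stated in the normal case, so I first pass to the normalization $\nu:\widetilde{\sX}\to\sX$, with conductor $\Gamma$. By~\eqref{slclc}, $\sX_t$ slc means $(\widetilde{\sX_t},\Gamma_t)$ is lc, where $\widetilde{\sX_t}$ is the normalization of the special fibre and $\Gamma_t$ its conductor; and $(\sX,\sX_t)$ slc will follow once I know $(\widetilde{\sX},\Gamma+\widetilde{\sX_t})$ is lc (again via the slc--lc correspondence, now for the pair, together with the fact that the total space away from $\sX_t$ is slc because $\sX_\eta$ is). So the problem becomes: show $(\widetilde{\sX},\Gamma+\widetilde{\sX_t})$ is lc in a neighbourhood of $\widetilde{\sX_t}$. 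Here $\widetilde{\sX_t}$ is the fibre over $t$ of the (flat, since $\widetilde{\sX}\to T$ has a $1$-dimensional regular target and $\widetilde{\sX}$ is $S_2$ hence torsion-free over $R$) family $\widetilde{\sX}\to T$, and it carries coefficient $1$. Applying Theorem~\ref{invadj} with $B=\widetilde{\sX_t}$ and $\Delta=\Gamma$: I need that $(\widetilde{\sX_t},\Diff_{\widetilde{\sX_t}}(\Gamma))$ is lc. The content of adjunction for a fibre of a morphism to a smooth curve is that the different of the conductor along the special fibre is exactly the conductor of the special fibre, i.e.\ $\Diff_{\widetilde{\sX_t}}(\Gamma)=\Gamma_t$ — this is a standard local computation (the residue isomorphism $\omega_{\widetilde{\sX}}(\widetilde{\sX_t})|_{\widetilde{\sX_t}}\cong\omega_{\widetilde{\sX_t}}$ restricts the normal-crossings description of $\omega$ in footnote~\ref{fn:repeat} correctly). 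Granting this, the hypothesis ``$\sX_t$ slc'' $=$ ``$(\widetilde{\sX_t},\Gamma_t)$ lc'' is precisely condition (ii) of Theorem~\ref{invadj}, whence $(\widetilde{\sX},\Gamma+\widetilde{\sX_t})$ is lc near $\widetilde{\sX_t}$, which is what we wanted.

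I expect the main obstacle to be the bookkeeping around the normalization step: one must check that the normalization $\widetilde{\sX}$ is still flat over $T$ with $\widetilde{\sX_t}$ equal to the normalization of $\sX_t$ (so that $\Gamma|_{\widetilde{\sX_t}}$ really is $\Gamma_t$), which requires knowing that normalization commutes with restriction to the special fibre here — this uses that $\sX_t$ is reduced and $S_2$ and the family is flat, plus that $\sX$ has no embedded components supported on $\sX_t$. In the literature this is exactly~\cite[around (5.7)]{Kosing}. Once that is in place, the computation $\Diff_{\widetilde{\sX_t}}(\Gamma)=\Gamma_t$ is the local residue calculation already invoked in footnote~\ref{fn:repeat} and in the discussion of~\eqref{conducteur}, and the rest is an application of Theorem~\ref{invadj} and the equivalence~\eqref{slclc}. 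An alternative, perhaps cleaner, route avoiding explicit inversion of adjunction would be to invoke the slc-version of inversion of adjunction recorded in~\cite[Lemma 2.10, Corollary 2.11]{Patakfibre}, applied directly to $(\sX,\sX_t)$ and its restriction $\sX_t$, bypassing the normalization entirely; I would present whichever is shorter, but the normalization argument has the advantage of reducing everything to the already-stated Theorem~\ref{invadj}.
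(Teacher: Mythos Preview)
Your proposal is correct and follows the same route as the paper: pass to the normalization $(\widetilde{\sX},\Gamma)$, identify the different $\Diff_{\widetilde{\sX_t}}(\Gamma)$ with the conductor $\Delta$ of $\sX_t$, apply inversion of adjunction (Theorem~\ref{invadj}) to get $(\widetilde{\sX},\Gamma+(\widetilde{\sX})_t)$ lc, and conclude via~\eqref{slclc}. One clarification: your ``main obstacle'' is a non-issue, because the paper never needs normalization to commute with taking the special fibre---it carefully distinguishes $\widetilde{\sX_t}$ (the normalization of $\sX_t$) from $(\widetilde{\sX})_t$ (the fibre of $\widetilde{\sX}\to T$), and the former is automatically the normalization of the latter; the identification $\Delta=\Diff_{\widetilde{\sX_t}}(\Gamma)$ is then obtained by the transparent chain of isomorphisms
\[
\omega_{\widetilde{\sX_t}}^{[m]}(m\Delta)\simeq \nu_t^*\omega_{\sX_t}^{[m]}\simeq \nu_t^*\iota^*\omega_{\sX}^{[m]}(m\sX_t)\simeq\tilde{\iota}^*\omega_{\widetilde{\sX}}^{[m]}(m(\widetilde{\sX})_t+m\Gamma)\simeq \omega_{\widetilde{\sX_t}}^{[m]}(m\Diff_{\widetilde{\sX_t}}(\Gamma)),
\]
each step being either~\eqref{conducteur}, restriction, or~\eqref{differente}.
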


\begin{proof}[Preuve]
  On consid\`ere le diagramme commutatif
  \begin{equation*}
    \begin{aligned}
      \xymatrix @R=0.4cm {
        \widetilde{\sX_t}\ar^{\tilde{\iota}}[r]\ar^{\nu_t}[d] & \widetilde{\sX} \ar^{\nu}[d] \\
        \sX_t\ar^{\iota}[r]&\sX , }
    \end{aligned}
  \end{equation*}
  o\`u $\nu$ et $\nu_t$ sont les normalisations de $\sX$ et $\sX_t$, et o\`u
  $\iota$ et $\tilde{\iota}$ sont les morphismes naturels. On note
  $\Gamma\subset \widetilde{\sX}$ et $\Delta\subset \widetilde{\sX_t}$ les
  conducteurs de $\sX$ et $\sX_t$ (voir \S\ref{paires}). Par d\'efinition des
  conducteurs et de la diff\'erente (voir~\eqref{conducteur} et~\eqref{differente}), on dispose d'isomorphismes naturels
$$\omega_{\widetilde{\sX_t}}^{[m]}(m\Delta)\simeq \nu_t^*\omega_{\sX_t}^{[m]}\simeq \nu_t^*\iota^*\omega_{\sX}^{[m]}(m\sX_t)\simeq\tilde{\iota}^*\omega_{\widetilde{\sX}}^{[m]}(m(\widetilde{\sX})_t+m\Gamma)\simeq \omega_{\widetilde{\sX_t}}^{[m]}(m\Diff_{\widetilde{\sX_t}}(\Gamma)).$$
La compos\'ee de ces isomorphismes \'etant l'identit\'e de
$\omega_{\widetilde{\sX_t}}^{[m]}$ aux points g\'en\'eriques de
$\widetilde{\sX_t}$, il suit que
$\Delta=\Diff_{\widetilde{\sX_t}}(\Gamma)$. Comme $\sX_t$ est slc, la paire
$(\widetilde{\sX_t},\Delta)$ est lc par~\eqref{slclc}, donc la paire
$(\widetilde{\sX},\Gamma+(\widetilde{\sX})_t)$ est lc par inversion de
l'adjonction (th\'eor\`eme~\ref{invadj}).  On d\'eduit que $(\sX,\sX_t)$ est
slc par~\eqref{slclc}, qui s'adapte imm\'ediatement au cas des paires.
\end{proof}

 Nous pouvons \`a pr\'esent d\'emontrer la propri\'et\'e d'unicit\'e dans le th\'eor\`eme~\ref{redstable2}.
 Soient $f_1:\mathcal{X}_1\to T$ et $f_2:\mathcal{X}_2\to T$ des familles stables et 
$\phi_{\eta}:\sX_{1,\eta}\isoto\sX_{2,\eta}$ un isomorphisme. On souhaite d\'emontrer que $\phi_{\eta}$ s'\'etend en un isomorphisme $\phi:\sX_1\isoto\sX_2$.

Pour cela, notons $\sX\subset\sX_1\times_T\sX_2$ l'adh\'erence du graphe de $\phi_{\eta}$.
Soit $\sY\to\sX$ une modification $S_2$ de $\sX$ qui est un isomorphisme au-dessus de $\sX_\eta$, telle que les composantes irr\'eductibles du lieu non normal de $\sY$ dominent toutes $T$\footnote{Pour construire $\sY$, on note $Z\subset\sX$ l'union des composantes irr\'eductibles du lieu non normal de $\sY$ qui dominent $T$, on remarque que les faisceaux $\sO_{\sX_\eta}$ sur $\sX_{\eta}$ et $\widetilde{\sO_{\sX\setminus Z}}$ sur $\sX\setminus Z$ se recollent en un faisceau d'alg\`ebres coh\'erent $\sA$ sur $\sX\setminus Z_t$ et on d\'efinit $\sY:=\Spec_{\sO_{\sX}}(j_*\sA)$, o\`u $j:\sX\setminus Z_t\hookrightarrow\sX$ est l'inclusion et o\`u $j_*\sA$ est un faisceau d'alg\`ebres coh\'erent $S_2$ par~\cite[Propositions  5.11.1 et 5.10.10]{EGA42}.}. Notons $g_i:\sY\to\sX_i$ les projections naturelles, et choisissons un entier $m>0$ tel que les $\omega^{[m]}_{\sX_i/T}$ soient inversibles.

Si $i\in\{1,2\}$, la paire $(\sX_i,(\sX_i)_t)$ est slc par le lemme~\ref{XXt}.
Pour $l\geqslant 0$, on dispose par~\eqref{discrepeq} d'un isomorphisme
$\omega^{[lm]}_{\sY}\isoto g_i^*\omega_{\sX_i}^{[lm]}(\sum_E lm \cdot a_{E}(\sX_i)E)$, o\`u la somme porte sur les diviseurs $g_i$-exceptionnels $E$ de $\sY_t$, et o\`u les $a_{E}(\sX_i)$ sont les discr\'epances de $\sX_i$.
 On en d\'eduit un isomorphisme $\omega^{[lm]}_{\sY}(lm\sY_t)\isoto (g_i^*\omega_{\sX_i}^{[lm]}(lm(\sX_i)_t))(\sum_E lm \cdot a_{E}(\sX_i)E)$. En le comparant \`a l'isomorphisme~\eqref{logdiscrepeq} d\'efinissant les discr\'epances de 
$(\sX_i,(\sX_i)_t)$, on voit que $a_{E}(\sX_i)=a_{E}(\sX_i,(\sX_i)_t)+b_E$ o\`u $b_E$ est la multiplicit\'e de $E$ dans $\sY_t$. On a donc $a_{E}(\sX_i)\geqslant -1+1=0$ car $(\sX_i,(\sX_i)_t)$ est slc. On en d\'eduit les \'egalit\'es
\begin{equation}
\label{discreptrick}
H^0(\sX_i, \omega_{\sX_i}^{[lm]})=H^0(\sY, g_i^*\omega_{\sX_i}^{[lm]}(\sum_E lm \cdot a_{E}(\sX_i)E))=H^0(\sY, \omega_{\sY}^{[lm]}),
\end{equation}
o\`u seule la premi\`ere \'egalit\'e est \`a justifier. Que le membre de gauche soit inclus dans celui de droite est une cons\'equence de la positivit\'e des  $a_{E}(\sX_i)$. 
Pour voir l'autre inclusion, notons $U_i\subset\sX_i$ l'ouvert au-dessus duquel $g_i$ est un isomorphisme. Si $\sigma\in H^0(\sY, g_i^*\omega_{\sX_i}^{[lm]}(\sum_E lm \cdot a_{E}(\sX_i)E))$, la restriction $\sigma|_{U_i}\in H^0(U_i,\omega_{U_i}^{[lm]})$ se rel\`eve \`a $H^0(\sX_i,\omega_{\sX_i}^{[lm]})$ par propri\'et\'e $S_2$ de $\sX_i$, car $\sX_i\setminus U_i$ a codimension $\geqslant 2$ dans $\sX_i$. 

On conclut en d\'efinissant $\phi$ par la cha\^ine d'isomorphismes naturels suivante, 
o\`u nous utilisons l'amplitude de $\omega_{\sX_1}^{[m]}$ et $\omega_{\sX_2}^{[m]}$ :
$$\sX_1\hspace{-.1em}\simeq \hspace{-.1em}\Proj_T\bigoplus _{l\geqslant 0}\hspace{-.1em}H^0(\sX_1, \omega_{\sX_1}^{[lm]})\hspace{-.1em}\simeq\hspace{-.1em} \Proj_T\bigoplus _{l\geqslant 0} \hspace{-.1em}H^0(\sY, \omega_{\sY}^{[lm]})\hspace{-.1em}\simeq \hspace{-.1em}\Proj_T\bigoplus _{l\geqslant 0} \hspace{-.1em}H^0(\sX_2, \omega_{\sX_2}^{[lm]})\hspace{-.1em}\simeq \hspace{-.1em}\sX_2.$$

\medskip

La preuve ci-dessus fait clairement appara\^itre le r\^ole de la condition (v) sur les discr\'epances dans la d\'efinition~\ref{defslc} : c'est elle qui permet d'identifier les alg\`ebres $m$-canoniques de $\sX_1$ et $\sX_2$, donc $\sX_1$ et $\sX_2$.

\subsection{Fibre g\'en\'erique normale}
\label{pargennormale}

Passons \`a l'assertion d'existence dans le th\'eor\`eme~\ref{redstable2}. On suppose dans ce paragraphe que la vari\'et\'e stable $X$ sur $K$ est normale, donc lc. Ce cas particulier crucial est d\^u \`a Hacon et Xu~\cite[Corollary 1.5]{HX}. Il repose sur le programme des mod\`eles minimaux par le biais de~\cite[Theorem 1.1]{HX} que nous discuterons plus au \S\ref{finitude}.

\subsubsection{Construction du mod\`ele stable}
\label{modelestable}

Soit $\overline{f}:\overline{\sX}\to T$ un morphisme projectif et plat tel que $\overline{\sX}_\eta\simeq X$.
Par le th\'eor\`eme de r\'eduction semi-stable de Kempf, Knudsen, Mumford et Saint-Donat~\cite[p.~198]{KKMS} (voir le th\'eor\`eme~\ref{KKMS}) sous la forme plus pr\'ecise \'enonc\'ee dans~\cite[Theorem 7.17]{KM}, on peut supposer, quitte \`a remplacer $R$ par une extension finie et $\overline{\sX}$ par le changement de base normalis\'e, qu'il existe une modification $\mu:\sY\to \overline{\sX}$
telle que $\sY$ soit r\'egulier et $\sY_t$ soit r\'eduit, et telle que si l'on note $\Delta\subset\sY$ l'adh\'erence du lieu exceptionnel
de $\mu_{\eta}$, le diviseur $\Delta+\sY_t$ est \`a croisements normaux stricts dans $\sY$. Cette application du th\'eor\`eme de r\'eduction semi-stable est le seul moment o\`u, dans la preuve du th\'eor\`eme~\ref{redstable2}, on doit modifier l'anneau de valuation discr\`ete de base.

Soit $m>0$ un entier tel que $\omega_{X}^{[m]}$ soit inversible.  Comme $X$ est \`a singularit\'es lc, on a $a_{\Delta_i}(X)\geqslant -1$ pour toute composante irr\'eductible $\Delta_i$ de $\Delta$. On d\'eduit, par un argument analogue \`a celui qui a d\'emontr\'e~\eqref{discreptrick}, que $H^0(X, \omega_X^{[lm]})=H^0(\sY_{\eta},\omega_{\sY_{\eta}}^{[lm]}(lm\Delta_{\eta}))$ pour tout $l\geqslant 0$. 
Comme $\omega_X^{[m]}$ est ample, l'alg\`ebre $m$-canonique 
$$A_K:=\bigoplus_{l\geqslant 0} H^0(\sY_\eta,\omega_{\sY_\eta}^{[lm]}(lm\Delta_\eta))$$
 est de type fini sur $K$ et $\Proj(A_K)\simeq X$.
Par~\cite[Theorem 1.1]{HX} (voir le th\'eor\`eme~\ref{thHX}) appliqu\'e au morphisme $\overline{f}\circ\mu:\sY\to T$, l'alg\`ebre $m$-canonique
$$A:=\bigoplus_{l\geqslant 0} H^0(\sY,\omega_{\sY}^{[lm]}(lm\Delta+lm\sY_t))$$
 de $(\sY,\Delta+\sY_t)$ est de type fini sur $R$. 
On peut donc former le mod\`ele canonique relatif $f:\sX:=\Proj_T A\to T$ de $(\sY,\Delta+\sY_t)$ au-dessus de $T$. Notons $\phi:\sY\dashrightarrow\sX$ l'application rationnelle naturelle.
On affirme que $f:\sX\to T$ est la famille stable recherch\'ee. 

Comme $\sX_\eta\simeq X$, il reste \`a d\'emontrer que $f$ est stable. C'est le but des \S\S~\ref{total}--\ref{famille}.

\subsubsection{\'Etude de l'espace total}
\label{total}
On commence par \'etudier l'espace total $\sX$ du morphisme $f:\sX\to T$. Pour ce faire, on s'appuie sur des propri\'et\'es \'el\'ementaires des mod\`eles canoniques relatifs, rassembl\'ees dans~\cite[Theorem 1.26]{Kosing}.

On montre ainsi que $\sX$ est normal, que l'application birationnelle $\phi:\sY\dashrightarrow\sX$ est une contraction rationnelle\footnote{Cela signifie que son inverse ne contracte pas de diviseurs.}
 et que, quitte \`a remplacer $m$ par un multiple, le faisceau $\omega^{[m]}_{\sX}(m\phi_*(\Delta+\sY_t))$ est inversible et $f$-ample. Comme $X\simeq \sX_{\eta}$ et comme $\Delta_\eta$ est contract\'e par $\mu_\eta$,  on a $\phi_*\Delta=0$.  La multiplication par une uniformisante de $R$ induit un isomorphisme $\sO_{\sX}\isoto\sO_{\sX}(\sX_t)=\sO_{\sX}(\phi_*\sY_t)$ ; on voit donc que $\omega^{[m]}_{\sX}$ est inversible et $f$-ample.
Enfin, une derni\`ere assertion de~\cite[Theorem 1.26]{Kosing} est que comme la paire $(\sY,\Delta+\sY_t)$ est \`a singularit\'es lc, il en va de m\^eme pour $(\sX,\phi_*(\Delta+\sY_t))=(\sX,\sX_t)$.

\subsubsection{\'Etude de la fibre sp\'eciale}
\label{speciale}

 Il est temps de d\'emontrer que la fibre sp\'eciale $\sX_t$ de $f$ est stable. Il ne reste plus qu'\`a voir que ses singularit\'es sont slc.

 Comme la fibre sp\'eciale $\sY_t$ de $\sY$ est r\'eduite et que $\phi$ est une contraction birationnelle, on voit que $\sX_t$ est g\'en\'eriquement r\'eduite. De plus, $\sX_t$ est $S_1$ comme diviseur de Cartier dans $\sX$ qui est normal donc $S_2$. Ces deux faits combin\'es montrent exactement que $\sX_t$ est r\'eduite. Nous avons v\'erifi\'e la condition (i) de la d\'efinition~\ref{defslc}.

La condition (ii) selon laquelle $\sX_t$ est au plus nodale en codimension $1$ r\'esulte du fait que $(\sX,\sX_t)$ est \`a singularit\'es lc et d'une \'etude fine des paires \`a singularit\'es lc en un point de codimension $2$ se situant sur une composante affect\'ee d'un coefficient $1$ du bord~\cite[Corollary 2.32]{Kosing}. Qu'une telle \'etude soit possible est \`a rapprocher du fait que l'on sache classifier les singularit\'es lc des surfaces~\cite[\S 4.1]{KM}.

Comme $(\sX,\sX_t)$ est \`a singularit\'es lc, il en va a fortiori de m\^eme pour $\sX$. Soit $E$ un diviseur au-dessus de $\sX$ dont l'image dans $\sX$ se situe sur la fibre sp\'eciale $\sX_t$. L'in\'egalit\'e $a_E(\sX)\geqslant a_E(\sX,\sX_t)+1\geqslant 0$ montre que $\sX_t$ ne contient aucun centre log canonique de $\sX$. Il suit du th\'eor\`eme~\ref{S3} que $\prof(\sO_{\sX,x})\geqslant \min(3, \dim(\sO_{\sX,x}))$ pour tout $x\in\sX_t$. Comme $\sX_t$ est un diviseur de Cartier dans $\sX$, on d\'eduit que $\prof(\sO_{\sX_t,x})\geqslant \min(2, \dim(\sO_{\sX_t,x}))$ pour tout $x\in\sX_t$, ce qui est la condition (iii).

Le faisceau $\omega_{\sX}^{[m]}|_{\sX_t}$ est inversible car $\omega_{\sX}^{[m]}$ l'est. Il co\"incide donc avec $\omega_{\sX_t}^{[m]}$ car ces deux faisceaux sont~$S_2$ et isomorphes en codimension $1$. Ceci d\'emontre que $\omega_{\sX_t}^{[m]}$ est inversible, donc que la condition (iv) est satisfaite.

Enfin, la paire $(\sX,\sX_t)$ \'etant \`a singularit\'es lc, il en va de m\^eme pour $(\widetilde{\sX_t},\Diff_{\widetilde{\sX_t}}(0))$ par adjonction (th\'eor\`eme~\ref{invadj}). Nous avons vu dans la preuve du lemme~\ref{XXt} que $\Diff_{\widetilde{\sX_t}}(0)$ est le conducteur de $\sX_t$. On d\'eduit donc de~\eqref{slclc} que $\sX_t$ est slc.

\subsubsection{Stabilit\'e de la famille}
\label{famille}

Il reste enfin \`a d\'emontrer que la famille $f:\sX\to T$ est stable au sens de la d\'efinition~\ref{deffam}. Le morphisme $f$ est plat puisque $\sX$ est r\'eduit et que toutes ses composantes irr\'eductibles dominent $T$. Nous avons d\'ej\`a montr\'e que ses fibres sont  \`a singularit\'es slc.

Soit $n\in\Z$. Le faisceau $\omega_{\sX/T}^{[n]}$ est plat car il est $S_1$ par construction et car les composantes irr\'eductibles de son support dominent $T$. Nous avons d\'ej\`a vu au \S\ref{speciale} que $\sX_t$ ne contient aucun centre log canonique de $\sX$. Le th\'eor\`eme~\ref{S3} montre donc que $\prof(\omega^{[n]}_{\sX/T,x})\geqslant \min(3, \dim(\sO_{\sX,x}))$ pour tout $x\in\sX_t$. Comme $\sX_t$ est un diviseur de Cartier dans $\sX$, on d\'eduit que $\omega^{[n]}_{\sX/T}|_{\sX_t}$ est $S_2$. Les deux faisceaux $\omega^{[n]}_{\sX/T}|_{\sX_t}$ et $\omega^{[n]}_{\sX_t}$ sont~$S_2$ et isomorphes en codimension $1$ ; ils co\"incident donc. Cela entra\^ine la stabilit\'e de la famille $f$ et ach\`eve la preuve du th\'eor\`eme~\ref{redstable2} quand $X$ est normale.

\subsection{Fibre g\'en\'erique non normale}
\label{nonnormale}

Expliquons maintenant l'\'enonc\'e d'existence du th\'eor\`eme~\ref{redstable2} dans le cas g\'en\'eral.

\subsubsection{Normalisation}
\label{techglue}

La preuve, due \`a Koll\'ar~\cite{Kobook}, proc\`ede  par r\'eduction au cas normal.
On applique le th\'eor\`eme de r\'eduction stable \`a la normalisation de $X$ qui est justiciable des arguments du \S\ref{pargennormale}, et on construit $f:\sX\to T$
 en \textit{recollant} le mod\`ele stable obtenu le long de lui-m\^eme pour faire appara\^itre les singularit\'es non normales requises. L'\'etape de recollement est surprenamment
d\'elicate \`a mettre en \oe{}uvre et constitue le c\oe{}ur du livre~\cite{Kosing}. Expliquons son principe.

Soit $X$ une vari\'et\'e \`a singularit\'es slc. Notons $\pi:\widetilde{X}\to X$ sa normalisation et $\Gamma$ son conducteur. On sait par~\eqref{slclc} que la paire $(\widetilde{X},\Gamma)$ est lc.
 Le morphisme $\pi|_{\Gamma}:\Gamma\to\pi(\Gamma)$ est de degr\'e deux au-dessus de l'ouvert dense de $\pi(\Gamma)$ le long duquel $X$ est \`a croisements normaux doubles. On en d\'eduit une involution rationnelle $\tau:\Gamma\dashrightarrow\Gamma$, qui s'\'etend en une involution r\'eguli\`ere $\tau:\widetilde{\Gamma}\to\widetilde{\Gamma}$ g\'en\'eriquement sans point fixe de la normalisation $\nu:\widetilde{\Gamma}\to\Gamma$ de $\Gamma$. G\'eom\'etriquement, $\tau$ \'echange les deux branches des singularit\'es \`a croisements normaux doubles de $X$. En comparant l'\'equation~\eqref{differente} d\'efinissant la diff\'erente et son pull-back par $\tau$, on voit que le $\Q$-diviseur $\Diff_{\widetilde{\Gamma}}(0)$ de $\widetilde{\Gamma}$ est $\tau$-invariant.

\medskip

Donnons deux exemples de ces constructions. Si $X=\{x^2=yz^2\}\subset \bA_k^3$ est le parapluie de Whitney, on a $\widetilde{X}=\bA^2_k$, la normalisation $\pi:\widetilde{X}\to X$ est donn\'ee par $(u,v)\mapsto(uv,v^2,u)$, on a $\Gamma=\widetilde{\Gamma}=\{u=0\}\subset \bA_k^2$ et $\tau:v\mapsto -v$, et $\Diff_{\widetilde{\Gamma}}(0)=0$.

Si $X=\{xyz=0\}\subset \bA_k^3$ est le point \`a croisements normaux triples, la normalisation~$\widetilde{X}$ est une union disjointe de trois espaces affines de dimension $2$, le conducteur $\Gamma$ est l'union de leurs axes de coordonn\'ees, de sorte que $\widetilde{\Gamma}$ est une union de six droites, et on calcule que $\Diff_{\widetilde{\Gamma}}(0)\subset\widetilde{\Gamma}$ est la r\'eunion de leurs origines. L'involution $\tau:\widetilde{\Gamma}\to\widetilde{\Gamma}$ \'echange ces droites deux par deux. Dans cet exemple, l'involution rationnelle $\tau:\Gamma\dashrightarrow\Gamma$ n'est pas r\'eguli\`ere en les trois points singuliers de $\Gamma$.

\medskip 

Koll\'ar a remarqu\'e que, sous des hypoth\`eses appropri\'ees, on peut construire la vari\'et\'e~$X$ \`a partir des donn\'ees $(\widetilde{X},\Gamma,\tau)$. Un exemple prototypique (qui n'est pas l'\'enonc\'e pr\'ecis dont on aura besoin pour la preuve du th\'eor\`eme~\ref{redstable2}) est~\cite[Theorem 5.13]{Kosing}.

\begin{theo}
\label{bij}
Les constructions ci-dessus induisent une bijection 
\begin{equation*}
\left\{\hspace{-.5em}
\begin{array}{c}
\mbox{Classes d'isomorphisme}\\
\mbox{de vari\'et\'es stables $X$}
\end{array}
\hspace{-.3em}\right\}\hspace{-.2em}\isoto\hspace{-.2em}\left\{\hspace{-.6em}
\begin{array}{c}
\mbox{Classes d'isomorphisme de paires lc stables}\\
\mbox{ $(\widetilde{X},\Gamma)$ munies d'une involution g\'en\'eriquement }\\
\mbox{ sans point fixe $\tau$ de $(\widetilde{\Gamma},\Diff_{\widetilde{\Gamma}}(0))$}
\end{array}
\hspace{-.5em}\right\}.
\end{equation*}
\end{theo}

\subsubsection{\'Etapes du recollement}
\label{recoller}

Il est facile de voir que l'application du th\'eor\`eme~\ref{bij} est injective~\cite[Proposition 5.3]{Kosing}. C'est sa surjectivit\'e qui est difficile. Le triplet $(\widetilde{X}, \Gamma,\tau)$ \'etant donn\'e, il s'agit de construire $X$ en recollant $\widetilde{X}$ sur elle-m\^eme le long de $\Gamma$ de la mani\`ere indiqu\'ee par $\tau$.
Plut\^ot que d'expliquer la d\'emonstration, dont la structure inductive est complexe, d\'ecrivons les difficult\'es qu'il faut surmonter, qui correspondent aussi aux \'etapes de la preuve du th\'eor\`eme~\ref{bij}.

\medskip

(i) On souhaite construire $X$ comme quotient de $\widetilde{X}$ par la relation d'\'equivalence qui identifie $\nu(x)$ et $\nu(\tau(x))$ pour tout point g\'eom\'etrique $x\in\widetilde{\Gamma}$. Comme le morphisme $\pi:\widetilde{X}\to X$ \`a construire est fini, il faut que la relation d'\'equivalence engendr\'ee par ces relations ait des classes d'\'equivalence finies. Ce n'est pas du tout une \'evidence !

Par exemple, prenons $\widetilde{X}=\bA^3_k$ et $\Gamma=\{xy=0\}$ de sorte que $\widetilde{\Gamma}$ est l'union de deux plans affines de coordonn\'ees respectives $(y_1,z_1)$ et $(x_2,z_2)$ et que $\Diff_{\widetilde{\Gamma}}(0)$ est l'union des deux droites d'\'equations $\{y_1=0\}$ et $\{x_2=0\}$. D\'efinissons une involution $\tau$ \'echangeant ces deux plans par l'\'equation $\tau(y_1,z_1)=(x_2,z_2+1)$.
Pour ces choix de $(\widetilde{X}, \Gamma,\tau)$, on voit que les points $(0,0,n)\in\widetilde{X}$ pour $n\in\Z$ sont tous \'equivalents.

Dans le cadre du th\'eor\`eme~\ref{bij}, ce sont les hypoth\`eses globales de projectivit\'e de $\widetilde{X}$ et d'amplitude de $\omega_{\widetilde{X}}(\Gamma)$ qui assureront la finitude de ces classes d'\'equivalences~\cite[Corollary 5.37]{Kosing}. La preuve de ce fait repose en dernier lieu sur des r\'esultats de finitude pour des groupes d'automorphismes birationnels de paires dont le fibr\'e canonique a des propri\'et\'es de positivit\'e~\cite[Corollary 10.69]{Kosing}.

(ii)  Supposons le probl\`eme d\'ecrit en (i) r\'esolu. On dispose alors d'une relation d'\'equivalence finie sur $\widetilde{X}$ dont on souhaite construire le quotient comme vari\'et\'e alg\'ebrique. Ce serait la vari\'et\'e $X$ recherch\'ee. Il n'est malheureusement pas du tout \'evident que ce soit possible.

Donnons un exemple en suivant~\cite[Example 9.7]{Kosing}.
 Soient $\widetilde{X}$ l'union de deux espaces affines de dimension $3$, de coordonn\'ees respectives $(x_1,y_1,z_1)$ et $(x_2,y_2,z_2)$, et $\Gamma\subset\widetilde{X}$ d\'efini par les \'equations $\{y_i^3=z_i^2\}$ pour $i\in\{1,2\}$. La normalisation $\widetilde{\Gamma}$ de $\Gamma$ est une union de deux plans affines, de coordonn\'ees respectives $(u_1,v_1)$ et $(u_2,v_2)$, et le morphisme $\nu:\widetilde{\Gamma}\to\Gamma$ est donn\'e par $(u_i,v_i)\mapsto (u_i,v_i^2,v_i^3)$. Choisissons pour $\tau$ l'involution \'echangeant ces deux plans, d\'efinie par la formule $\tau(u_1,v_1)=(u_1+v_1,v_1)$.

 On v\'erifie ais\'ement que la relation d'\'equivalence engendr\'ee par $\nu(x)\sim\nu(\tau(x))$ est finie, de sorte que le probl\`eme soulev\'e en (i) n'appara\^it pas. De plus, cette relation d'\'equivalence admet bien un quotient cat\'egorique dans la cat\'egorie des $k$-sch\'emas : le spectre de la sous-$k$-alg\`ebre de $k[x_1,y_1,z_1]\times k[x_2,y_2,z_2]$ engendr\'ee par les id\'eaux $\langle y_1,z_1\rangle$ et $\langle y_2,z_2\rangle$. Cette alg\`ebre n'est pas de type fini sur $k$ (ni m\^eme noeth\'erienne). Le quotient de $\widetilde{X}$ par la relation d'\'equivalence consid\'er\'ee n'est donc pas une vari\'et\'e.

Le probl\`eme avec cet exemple est que la paire $(\widetilde{X},\Gamma)$ n'est pas lc. 
C'est seulement sous l'hypoth\`ese que les singularit\'es de $(\widetilde{X},\Gamma)$ sont lc que Koll\'ar montre l'existence du quotient $X$ recherch\'e~\cite[Theorem 5.32]{Kosing}. 
Cette hypoth\`ese est utilis\'ee de la mani\`ere suivante. La vari\'et\'e $X$ est obtenue par un proc\'ed\'e inductif qui consiste, en simplifiant, \`a d'abord construire les quotients des centres log canoniques de $(\widetilde{X},\Gamma)$, en commen\c{c}ant par ceux qui ont dimension minimale. Pour ce faire, on utilise de mani\`ere essentielle des propri\'et\'es de seminormalit\'e des centres log canoniques~\cite[\S 4.20]{Kosing}, qui permettent en un sens de les manipuler topologiquement. 
Dans l'exemple ci-dessus, c'est le d\'efaut de seminormalit\'e de $\Gamma$ qui pose v\'eritablement probl\`eme.

(iii) Maintenant que la vari\'et\'e $X$ est construite, il faut v\'erifier qu'elle a les propri\'et\'es requises. Si la plupart sont faciles \`a v\'erifier, l'existence d'un entier $m>0$ tel que $\omega_X^{[m]}$ soit inversible est hautement non triviale.
\`A nouveau, illustrons-le sur un exemple.

Soient $\widetilde{X}$ l'union disjointe de trois plans affines de coordonn\'ees $(x_1,y_1)$, $(x_2,y_2)$ et $(x_3,y_3)$, et $\Gamma\subset\widetilde{X}$ le diviseur d\'efini par les \'equations $\{y_1=0\}$, $\{x_2y_2=0\}$ et $\{x_3=0\}$. La normalisation $\widetilde{\Gamma}$ de $\Gamma$ est une union disjointe de quatre droites affines de coordonn\'ees respectives $x_1$, $x_2$, $y_2$ et $y_3$.  Choisissons pour $\tau$ l'involution de $\widetilde{\Gamma}$ \'echangeant les deux premi\`eres droites par la formule $\tau(x_1)=x_2$, et les deux derni\`eres par $\tau(y_2)=y_3$. Aucun des probl\`emes d\'ecrits en (i) et (ii) ne se pose et l'on peut donc consid\'erer la vari\'et\'e~$X$ quotient de $\widetilde{X}$ par la relation d'\'equivalence engendr\'ee par $\nu(x)\sim\nu(\tau(x))$. Avec les notations de~\eqref{cone}, on a $X=C(Y,L)$, o\`u $Y$ est une cha\^ine de trois droites projectives et o\`u le fibr\'e en droites ample $L$ sur $Y$ a degr\'e $1$ sur chacune de ces trois composantes.  On v\'erifie alors en adaptant~\cite[Proposition 3.14 (4)]{Kosing} que $\omega_X^{[m]}$ n'est inversible pour aucun $m>0$.

Pour expliquer cela, remarquons que la diff\'erente $\Diff_{\widetilde{\Gamma}}(0)\subset \widetilde{\Gamma}$ est l'union des origines de la deuxi\`eme et de la troisi\`eme composante de $\widetilde{\Gamma}$. On voit donc que $\tau$ ne pr\'eserve pas $\Diff_{\widetilde{\Gamma}}(0)$. Dans la preuve du th\'eor\`eme~\ref{bij}, c'est l'hypoth\`ese que $\Diff_{\widetilde{\Gamma}}(0)$ soit $\tau$-invariant qui assure l'existence d'un $m>0$ tel que $\omega_X^{[m]}$ est inversible
\cite[Theorem 5.38]{Kosing}. 
Cette hypoth\`ese est utilis\'ee comme suit. Soit $m>0$ tel que $\omega^{[m]}_{\widetilde{X}}(m\Gamma)$ est inversible.
On souhaite descendre $\omega^{[m]}_{\widetilde{X}}(m\Gamma)$ (ou une de ses puissances) en un faisceau inversible sur $X$, isomorphe \`a $\omega_X^{[m]}$ (ou \`a une de ses puissances). Pour ce faire, on raisonne g\'eom\'etriquement en consid\'erant l'espace total $\widetilde{p}:\widetilde{L}\to\widetilde{X}$ du fibr\'e en droites associ\'e \`a $\omega^{[m]}_{\widetilde{X}}(m\Gamma)$ sur $\widetilde{X}$. Notons $\Delta:=\widetilde{p}^{-1}(\Gamma)$. Le fait que la diff\'erente $\Diff_{\widetilde{\Gamma}}(0)$ soit $\tau$-invariante implique que $\tau$~se rel\`eve naturellement en une involution $\sigma$ de la normalisation $\widetilde{\Delta}$ de $\Delta$. On peut alors appliquer les \'etapes (i) et (ii) de la technique de recollement au triplet $(\widetilde{L},\Delta,\sigma)$. On construit de la sorte une vari\'et\'e $p:L\to X$ qu'on v\'erifie \^etre (quitte \`a remplacer $m$ par un multiple) le fibr\'e en droites associ\'e \`a $\omega^{[m]}_X$. Cela implique en particulier que $\omega^{[m]}_X$ est inversible, ce qu'on d\'esirait montrer.

\subsubsection{R\'eduction stable}
\label{stableslc}
Expliquons maintenant, en suivant~\cite{Kobook}, comment la m\'ethode de recollement est utilis\'ee pour d\'emontrer l'assertion d'existence dans le th\'eor\`eme~\ref{redstable2}.

Soit $X$ une vari\'et\'e stable sur $K$. Notons $\widetilde{X}$ sa normalisation, $\Gamma$ son conducteur et $\tau:\widetilde{\Gamma}\to \widetilde{\Gamma}$ l'involution naturelle, qui pr\'eserve $\Diff_{\widetilde{\Gamma}}(0)$.
 Le th\'eor\`eme de r\'eduction stable pour les vari\'et\'es normales (voir \S\ref{pargennormale}), convenablement \'etendu au cas des paires, montre que $(\widetilde{X},\Gamma)$ admet un mod\`ele stable $\widetilde{f}:(\widetilde{\sX},\mathit{\Gamma})\to T$ sur $T$.
Le lemme~\ref{XXt}, adapt\'e au cas des paires, montre que la paire $(\widetilde{\sX},\mathit{\Gamma}+\widetilde{\sX}_t)$ est lc, et on d\'eduit donc de l'adjonction (th\'eor\`eme~\ref{invadj}) que $(\widetilde{\mathit{\Gamma}},\Diff_{\widetilde{\mathit{\Gamma}}}(\widetilde{\sX}_t))=(\widetilde{\mathit{\Gamma}},\Diff_{\widetilde{\mathit{\Gamma}}}(0)+\widetilde{\mathit{\Gamma}}_t)$ est lc. De plus, pour $m>0$ bien choisi, $\omega^{[m]}_{\widetilde{\mathit{\Gamma}}}(m\Diff_{\widetilde{\mathit{\Gamma}}}(0))=\omega^{[m]}_{\widetilde{\sX}}(m\mathit{\Gamma})|_{\widetilde{\mathit{\Gamma}}}$ est un faisceau inversible ample relativement \`a $T$. Argumentant comme aux \S\S\ref{speciale}--\ref{famille}, on voit que 
$(\widetilde{\mathit{\Gamma}},\Diff_{\widetilde{\mathit{\Gamma}}}(0))\to T$ est une famille stable.
Par l'\'enonc\'e d'unicit\'e dans le th\'eor\`eme de r\'eduction stable (voir \S\ref{unicite}), convenablement \'etendu au cas des paires, l'involution $\tau$ sur la fibre g\'en\'erique s'\'etend en une involution encore not\'ee $\tau:(\widetilde{\mathit{\Gamma}},\Diff_{\widetilde{\mathit{\Gamma}}}(0))\to (\widetilde{\mathit{\Gamma}},\Diff_{\widetilde{\mathit{\Gamma}}}(0))$. On applique alors la technique de recollement d\'ecrite au \S\ref{recoller}\footnote{L'\'etape (i) du proc\'ed\'e de recollement est plus facile \`a mettre en \oe{}uvre ici que dans le cadre du th\'eor\`eme~\ref{bij}. En effet, on peut exploiter l'existence du recollement $X$ de $(\widetilde{X},\Gamma,\tau)=(\widetilde{\sX}_\eta,\mathit{\Gamma}_\eta,\tau_\eta)$, et le fait (d\'ej\`a expliqu\'e au \S\ref{speciale}) qu'aucun centre log canonique de $(\widetilde{\sX},\mathit{\Gamma})$ n'est inclus dans la fibre sp\'eciale $\widetilde{\sX}_t$, et appliquer~\cite[Lemma 9.55]{Kosing}. Les \'etapes (ii) et (iii) sont en revanche inchang\'ees.} au triplet $(\widetilde{\sX},\mathit{\Gamma},\tau)$, ce qui donne lieu \`a un morphisme $f:\sX\to T$, dont on v\'erifie qu'elle est la famille stable recherch\'ee.

\medskip

La preuve que nous venons de d\'ecrire ne permet pas de se limiter aux vari\'et\'es stables qui sont Cohen-Macaulay. En effet, la normalisation $\widetilde{X}$ d'une vari\'et\'e stable Cohen-Macaulay $X$ peut ne pas \^etre elle-m\^eme Cohen-Macaulay~\cite[Example 23]{Kosurvey}.

\subsection{Finitude de l'alg\`ebre canonique}
\label{finitude}

Revenons sur le th\'eor\`eme de Hacon et Xu que nous avons utilis\'e au \S\ref{pargennormale}, et qui est un ingr\'edient d\'ecisif de la preuve du th\'eor\`eme~\ref{redstable}. 

Soit $(X,\Delta)$ une paire \`a singularit\'es slc. 
D\'efinissons l'\textbf{alg\`ebre canonique} de $(X,\Delta)$ comme \'etant $A(X,\Delta):=\bigoplus_{\l\geqslant 0} H^0(X,\omega_X^{[l]}(\left \lfloor{l\Delta}\right \rfloor))$, o\`u $\left \lfloor{l\Delta}\right \rfloor$ est le diviseur de Weil sur $X$ obtenu en arrondissant les coefficients de $l\Delta$ \`a l'entier inf\'erieur. On conjecture (voir par exemple ~\cite[Conjecture A]{FG}) la propri\'et\'e suivante.

\begin{conj}
\label{conjfin}
L'alg\`ebre canonique d'une paire projective lc est de type fini.
\end{conj}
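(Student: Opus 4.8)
\medskip

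\emph{A possible approach.} The plan is to reduce the conjecture to the klt case, where finite generation of the canonical ring is known, the reduction splitting according to whether the $\mathbb{Q}$-line bundle $\omega_X(\Delta)$ is big.

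First I would replace $(X,\Delta)$ by a $\mathbb{Q}$-factorial dlt model. By~\cite{BCHM} there is a crepant projective birational morphism $\pi\colon(X',\Delta')\to(X,\Delta)$, so that $K_{X'}+\Delta'=\pi^*(K_X+\Delta)$, with $(X',\Delta')$ being $\mathbb{Q}$-factorial dlt; as the log canonical algebra depends only on the crepant-birational class of an lc pair, one has $A(X,\Delta)\simeq A(X',\Delta')$ and one may assume from the start that $(X,\Delta)$ is $\mathbb{Q}$-factorial dlt. Write $S:=\lfloor\Delta\rfloor$ for the reduced part of the boundary, so that $(X,\Delta-S)$ is klt.

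If $\omega_X(\Delta)$ is big, I would run the $(K_X+\Delta)$-minimal model program. For a dlt pair the special termination theorem reduces the termination of flips to a neighbourhood of $S$, where it holds, and one reaches a model on which $K_X+\Delta$ is nef; being moreover big, it is semiample by the base-point-free theorem for lc pairs, and hence $A(X,\Delta)$ is finitely generated. (One may alternatively invoke directly the finite generation of the log canonical ring of an lc pair with big boundary, which is known.) If $\omega_X(\Delta)$ is not big, then either $\kappa(K_X+\Delta)=-\infty$, in which case $A(X,\Delta)=H^0(X,\sO_X)$ is already finite over $k$, or $0\leqslant\kappa(K_X+\Delta)<\dim X$, and I would pass to the Iitaka fibration $f\colon X\dashrightarrow Z$ attached to $K_X+\Delta$. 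Applying the canonical bundle formula of Fujino--Mori, in the form extended to log canonical fibrations by Ambro and Fujino--Gongyo, one writes, on a suitable birational model, $K_X+\Delta$ as the pullback of $K_Z+\Delta_Z+M_Z$ for a generalized lc pair $(Z,\Delta_Z+M_Z)$ with $\dim Z<\dim X$ and $K_Z+\Delta_Z+M_Z$ big; finite generation of $A(X,\Delta)$ then reduces, by induction on the dimension, to that of the section ring of this big generalized pair, which is again a consequence of the machinery of~\cite{BCHM}.

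The hard part, and the reason the assertion is only a conjecture, is the non-big case. The canonical bundle formula for \emph{log canonical} (as opposed to klt) fibrations, and above all the semiampleness of the nef $\mathbb{Q}$-divisor $K_X+\Delta$ on the output of the minimal model program, rest on abundance-type inputs --- the log abundance conjecture --- that are not available for log canonical pairs in high dimension. Consequently the argument above is unconditional only when $\omega_X(\Delta)$ is big, or when $\dim X\leqslant 4$ (where the required minimal model program, base-point-free and abundance statements are theorems of Fujino), or under the additional assumption of log abundance.
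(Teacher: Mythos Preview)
The statement you address is presented in the paper as a \emph{conjecture}, not a theorem: no proof is given, and the surrounding text explicitly attributes it to \cite{FG} as an open problem. There is thus no proof in the paper to compare your proposal against. Your own text is consistent with this --- you label it ``a possible approach'' and correctly diagnose at the end that the non-big case rests on abundance-type inputs unavailable for lc pairs in high dimension, which is precisely why the statement remains conjectural.

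The paper's discussion is briefer than yours: it records that \cite{BCHM} settles the klt general-type case, notes Koll\'ar's counterexample showing the conjecture fails for slc pairs, and then passes to the result actually used in the proof of stable reduction, namely the \emph{relative} statement of Hacon--Xu (th\'eor\`eme~\ref{thHX}), which assumes finite generation on the generic fibre as a hypothesis.

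One technical point on your sketch of the big case: special termination for a dlt pair does not yield termination of the full $(K_X+\Delta)$-MMP --- it only guarantees that flipping loci eventually miss $\lfloor\Delta\rfloor$, after which one is still left with a klt MMP whose termination is not known in general. The honest route when $K_X+\Delta$ is big is the one you mention parenthetically: exploit bigness to perturb to a klt pair with the same section ring and apply \cite{BCHM} directly, or invoke the lc MMP with scaling as developed in \cite{HX}.
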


Les premiers r\'esultats concernant la conjecture~\ref{conjfin} en dimension arbitraire ont \'et\'e obtenus par Birkar, Cascini, Hacon et McKernan~\cite{BCHM}. Ils la r\'esolvent en particulier pour les paires de type g\'en\'eral\footnote{La paire $(X,\Delta)$ est \textbf{de type g\'en\'eral} s'il existe un entier $m>0$ tel que $\omega_X^{[m]}(m\Delta)$ soit inversible et induise une application rationnelle $X\dashrightarrow\bP^N_k$ qui est birationnelle sur son image.} \`a singularit\'es klt.  Ce travail remarquable a d\'ej\`a fait l'objet d'un expos\'e dans ce s\'eminaire~\cite{Druel} et est \`a la base des d\'eveloppements ult\'erieurs.

De mani\`ere surprenante, la conjecture~\ref{conjfin} tombe en d\'efaut pour les vari\'et\'es slc : Koll\'ar a donn\'e un exemple de surface projective slc qui est de type g\'en\'eral mais dont l'alg\`ebre canonique n'est pas de type fini~\cite[Proposition~1]{Kotwo}. Les singularit\'es slc se comportent donc moins bien vis-\`a-vis du programme des mod\`eles minimaux que leurs homologues normales que sont les singularit\'es klt, lc... C'est pour cette raison que nous avons d\^u traiter s\'epar\'ement, dans la preuve du th\'eor\`eme de r\'eduction stable, les vari\'et\'es normales au \S\ref{pargennormale} et les vari\'et\'es non normales au \S\ref{nonnormale}.

Le th\'eor\`eme de Hacon et Xu constitue un progr\`es sur ces questions dans le cas lc, dans un contexte adapt\'e \`a la preuve du th\'eor\`eme~\ref{redstable2} : ils travaillent dans une situation relative, et supposent connue l'existence du mod\`ele canonique de la fibre g\'en\'erique. \'Enon\c{c}ons le cas particulier\footnote{La finitude des alg\`ebres canoniques dans l'\'enonc\'e du th\'eor\`eme~\ref{thHX} est \'equivalente \`a l'existence des bons mod\`eles minimaux dans l'\'enonc\'e de~\cite[Theorem 1.1]{HX}, par~\cite[Lemma 2.9.1]{HMX}.}
de~\cite[Theorem 1.1]{HX} qui nous a \'et\'e utile.

\begin{theo}
\label{thHX}
Soit $f:\sX\to T$ un morphisme projectif, avec $\sX$ r\'egulier et $\Delta\subset \sX$ un diviseur \`a croisements normaux stricts. Supposons que $(\sX_\eta,\Delta_\eta)$ est de type g\'en\'eral. Si $A(\sX_\eta,\Delta_\eta)$ est de type fini sur $K$, alors $A(\sX,\Delta)$ est de type fini  sur $R$.
\end{theo}

La preuve utilise de mani\`ere cruciale les techniques de~\cite{BCHM}. Nous nous contentons d'en d\'ecrire la structure.

Apr\`es avoir peut-\^etre remplac\'e $(\sX,\Delta)$ par un mod\`ele birationnel (un mod\`ele minimal, construit en adaptant les techniques de~\cite{BCHM}), on souhaite d\'emontrer qu'il existe un entier $m>0$ tel que $\omega^{[m]}_{\sX}(m\Delta)$ est inversible et sans point base : ceci entra\^ine en effet la finitude de l'alg\`ebre canonique. Il est bien s\^ur n\'ecessaire de savoir d\'emontrer que la restriction $\omega^{[m]}_{\sX}(m\Delta)|_{\Delta}$ est elle-m\^eme sans point base, et un th\'eor\`eme d'extension d\^u \`a Fujino~\cite[Theorem 1.1]{Fubpf} montre que cela  serait en fait suffisant.

On voudrait obtenir cette information dans le cadre d'une r\'ecurrence sur la dimension. Malheureusement, $\Delta$ n'est en g\'en\'eral pas normal : il a seulement des singularit\'es slc et on ne peut lui appliquer l'hypoth\`ese de r\'ecurrence. L'id\'ee de Hacon et Xu est de plut\^ot appliquer l'hypoth\`ese de r\'ecurrence \`a la normalisation de $\Delta$, puis de redescendre l'information obtenue \`a $\Delta$ \`a l'aide de la technique de recollement de Koll\'ar que nous avons d\'ecrite aux \S\S\ref{techglue}--\ref{recoller}.

\section{Construction de l'espace de modules}
\label{consedm}

Cette section est consacr\'ee \`a la preuve du th\'eor\`eme~\ref{thedm}. Si la strat\'egie est connue depuis longtemps~\cite{KSB, Kocomplete, Viehweg}, beaucoup de d\'etails cruciaux n'ont \'et\'e mis au point que tr\`es r\'ecemment~\cite{KoHH, Kosing, HX, HMX, Fujino}. Le lecteur pourra consulter avec profit les textes de survol~\cite{YPG, Kosurvey}, ainsi que le livre~\cite{Kobook} pour une pr\'esentation d\'etaill\'ee.

La d\'emonstration exploite \`a nouveau les plongements pluricanoniques des vari\'et\'es stables. Ils permettent de param\'etrer les vari\'et\'es stables de fonction de Hilbert fix\'ee par une union de sous-sch\'emas localement ferm\'es d'un sch\'ema de Hilbert (\S\S\ref{borne}--\ref{repr}) qu'on quotiente ensuite par le groupe des transformations projectives pour construire l'espace de modules recherch\'e (\S\S\ref{champ}--\ref{grossier}).

\subsection{Caract\`ere born\'e}
\label{borne}

Fixons une fonction de Hilbert $F:\Z\to\Z$. La premi\`ere \'etape de la construction de $\overline{M}_F$ est la recherche d'une famille de vari\'et\'es projectives, dans laquelle toutes les vari\'et\'es stables de fonction de Hilbert $F$ apparaissent et qui soit \textit{born\'ee} au sens o\`u la base de la famille est elle-m\^eme une vari\'et\'e, donc de type fini sur le corps de base $k$.
 Pour cela, il suffit de d\'emontrer l'\'enonc\'e suivant.

\begin{theo}
\label{thborne}
Il existe un entier $m$ tel que pour toute vari\'et\'e stable $X$ de fonction de Hilbert $F$, le faisceau $\omega_X^{[m]}$ est inversible, tr\`es ample et sans cohomologie sup\'erieure.
\end{theo}

En effet, la famille universelle $g:\sY\to H$ au-dessus du sch\'ema de Hilbert $H$ param\'etrant les sous-sch\'emas ferm\'es de $\bP_{k}^{F(m)-1}$ de fonction de Hilbert $n\mapsto F(nm)$ a alors les propri\'et\'es requises. 

En restriction aux vari\'et\'es lisses, le th\'eor\`eme~\ref{thborne} est un cas particulier du grand th\'eor\`eme de Matsusaka~\cite{Matsusaka}. Le cas des courbes est facile (on peut prendre $m=3$) et c'est Alexeev qui a r\'esolu le cas des surfaces~\cite{Ale}. En g\'en\'eral, le th\'eor\`eme~\ref{thborne} a \'et\'e d\'emontr\'e par Hacon, McKernan et Xu~\cite{HMX}. 
On ne donnera ici aucune indication sur sa preuve, qui repose sur le programme des mod\`eles minimaux : on renvoie le lecteur au texte de survol~\cite{HMX2}.

\subsection{Repr\'esentabilit\'e de la stabilit\'e}
\label{repr}

La seconde \'etape de la preuve consiste \`a isoler, dans le sch\'ema de Hilbert $H$ construit au \S\ref{borne}, le lieu param\'etrant des vari\'et\'es stables $m$-canoniquement plong\'ees de fonction de Hilbert $F$.

\begin{theo}
\label{threpr}
Il existe un morphisme $\iota:H'\to H$ tel que le changement de base $g':\sY'\to H'$ de $f$ par $\iota$
soit une famille de vari\'et\'es stables $m$-canoniquement plong\'ees de fonction de Hilbert $F$, et qui soit universel pour cette propri\'et\'e.
\end{theo}

On proc\`ede par \'etapes, en effectuant plusieurs changements de base successifs, chacun am\'eliorant les propri\'et\'es du morphisme $g:\sY\to H$. Remarquons que $g$ est d\'ej\`a plat par d\'efinition du sch\'ema de Hilbert.

\subsubsection{}  
\label{deminormal}
On commence par remplacer $H$ par l'ouvert $H_1\subset H$ param\'etrant des vari\'et\'es r\'eduites, $S_2$ et \'equidimensionnelles~\cite[Th\'eor\`eme 12.2.1]{EGA43}, et dont les singularit\'es en codimension $1$ sont au plus des croisements normaux doubles (pour ce dernier point, on remarque que cette condition est \'equivalente \`a avoir des singularit\'es au plus nodales aux points de codimension $1$ et on utilise le fait que les d\'eformations des singularit\'es nodales sont au plus nodales ; voir~\cite[\S 1.41.2]{Kosing} pour un \'enonc\'e pr\'ecis).

Notant $g_1:\sY_1\to H_1$ le changement de base, on peut alors d\'efinir le faisceau canonique relatif $\omega_{\sY_1/H_1}$ et ses puissances r\'eflexives $\omega_{\sY_1/H_1}^{[n]}$ pour $n\in\Z$, comme au \S\ref{pardeffamst}.

\subsubsection{}
\label{ssHH}  
La seconde \'etape consiste \`a assurer que les faisceaux $\omega_{\sY_1/H_1}^{[n]}$ soient plats de formation commutant \`a tout changement de base, pour tout $1\leqslant n\leqslant m$. Cette \'etape est cruciale 
si l'on souhaite munir $\overline{M}_F$ d'une structure sch\'ematique raisonnable.
Elle a \'et\'e enti\`erement clarifi\'ee par Koll\'ar~\cite{KoHH} ; d'autres approches avaient \'et\'e propos\'ees par Hassett et Kov\'acs~\cite{HaKo}\footnote{L'article~\cite{HaKo} utilise la condition de Viehweg plut\^ot que celle de Koll\'ar (voir \S\ref{pardeffamst}).} et par Abramovich et Hassett~\cite{AbraHa}.

Koll\'ar construit, pour $1\leqslant n\leqslant m$, des d\'ecompositions $\Hull(\omega_{\sY_1/H_1}^{[n]})\to H_1$ de $H_1$ en sous-sch\'emas localement ferm\'es au-dessus desquels les faisceaux coh\'erents $\omega_{\sY_{1,s}}^{[n]}$ pour $s\in H_1$ s'organisent en une famille plate,
et qui sont universelles pour cette propri\'et\'e. Il ne reste plus qu'\`a d\'efinir $H_2:=\Hull(\omega_{\sY_1/H_1}^{[1]})\times_{H_1}\dots\times_{H_1}\Hull(\omega_{\sY_1/H_1}^{[m]})$ comme \'etant la d\'ecomposition de $H_1$ en sous-sch\'emas localement ferm\'es qui les raffine toutes, et \`a consid\'erer le morphisme $g_2:\sY_2\to H_2$ obtenu par changement de base.

Un des attraits du point de vue de Koll\'ar est sa g\'en\'eralit\'e : il n'utilise pas de propri\'et\'es particuli\`eres des vari\'et\'es stables, ni des faisceaux pluricanoniques. Il consid\`ere plut\^ot un morphisme projectif $p:X\to S$ arbitraire et un faisceau coh\'erent $\sF$ sur $X$. Dans cette situation, il construit une d\'ecomposition $\Hull(\sF)\to S$ de $S$ en sous-sch\'emas localement ferm\'es au-dessus de laquelle les hulls $\sF_s^{[**]}$ 
des $\sF_s$ pour $s\in S$ s'organisent en une famille plate, et qui est universelle pour cette propri\'et\'e~\cite[Theorem~21]{KoHH}. Si $X_s$ est $S_2$ et $\Supp(\sF_s)=X_s$, le hull $\sF_s^{[**]}$  n'est autre que le double dual $\sF_s^{**}$ de $\sF_s$ (voir~\cite[Definition 14]{KoHH} ou ci-dessous pour une d\'efinition g\'en\'erale des hulls). 
Ceci s'applique dans la situation que nous avons consid\'er\'ee. Avec les notations ci-dessus, on a donc
$(\omega_{\sY_1/H_1, s}^{[n]})^{[**]}=(\omega_{\sY_1/H_1, s}^{[n]})^{**}=\omega_{\sY_1,s}^{[n]}$ et la d\'ecomposition $\Hull(\omega_{\sY_1/H_1}^{[n]})$ a bien les propri\'et\'es voulues.
 
Pour construire $\Hull(\sF)$, Koll\'ar en identifie une compactification naturelle : l'espace de modules $\QHusk(\sF)$ des husks quotients coh\'erents de $\sF$. Un \textbf{husk quotient} coh\'erent de $\sF$ relativement \`a $S$ est un morphisme de faisceaux coh\'erents $q:\sF\to\sG$ sur $X$ o\`u $\sG$ est $f$-plat, tel que pour tout $s\in S$, le faisceau $\sG_s$ sur $X_s$ est pur
et $q$ est surjectif aux points g\'en\'eriques du support de $\sG_s$~\cite[Definition 9]{KoHH}. Ce husk quotient est un \textbf{hull} si pour tout $s\in S$, le morphisme $q_s$ est un isomorphisme aux points g\'en\'eriques du support de $\sG_s$, est surjectif aux points de codimension $1$ du support de $\sG_s$ et est maximal pour ces propri\'et\'es~\cite[Definition 17]{KoHH}. Par des techniques inspir\'ees de la construction des sch\'emas $\Quot$ de Grothendieck, on d\'emontre~\cite[Theorem 10]{KoHH} que le foncteur qui \`a un $S$-sch\'ema $T$ associe l'ensemble des hulls quotients coh\'erents $q:\sF_T\to \sG$ de $\sF_T$ relativement \`a $T$ est repr\'esentable par une union d\'enombrable d'espaces alg\'ebriques propres sur $S$, qu'on note $\QHusk(\sF)$ : c'est l'espace de modules $\QHusk(\sF)$ des husks quotients coh\'erents de $\sF$. On v\'erifie enfin que le
sous-foncteur des hulls est repr\'esentable par un ouvert $\Hull(\sF)$ de $\QHusk(\sF)$ et que $\Hull(\sF)$ est une d\'ecomposition de $S$ en sous-sch\'emas localement ferm\'es ~\cite[Theorem 21]{KoHH}.

\subsubsection{} 
\label{inversible}
 On remplace ensuite $H_2$ par l'ouvert $H_3\subset H_2$ au-dessus duquel $\omega_{\sY_2/H_2}^{[m]}$ est un faisceau inversible. Pour construire un tel ouvert, on proc\`ede comme suit.

Par le lemme de Nakayama, l'ensemble $\{y\in\sY_2| \dim_{\kappa(y)}(\omega_{\sY_2/H_2}^{[m]}\otimes \kappa(y))>1\}$ est ferm\'e (voir~\cite[III Example 12.7.2]{Hartshorne}). On note $H_3\subset H_2$ le compl\'ementaire de son image dans $H_2$ et $g_3:\sY_3\to H_3$ le changement de base. Montrons que $\sF:=\omega_{\sY_3/H_3}^{[m]}$ est inversible. Soit $y\in\sY_3$ et soit $s\in \sF_{y}$ engendrant $\sF_{y}\otimes \kappa(y)$. Par le lemme de Nakayama, le morphisme $\sO_{\sY_3,y}\xrightarrow{s}\sF_y$ est surjectif ; on note $\sN$ son noyau.
 La suite courte 
$$0\to \sN\otimes\kappa(g_3(y))\to\sO_{\sY_3,y}\otimes\kappa(g_3(y))\to\sF_y\otimes\kappa(g_3(y))\to 0$$
 est exacte car $\sF_y$ est $\sO_{H_3,g_3(y)}$-plat par \S\ref{ssHH}. Comme le support ensembliste de $\sF$ est $\sY_3$ et que $\sO_{\sY_3,y}\otimes\kappa(g_3(y))$ est r\'eduit, on d\'eduit que $\sN\otimes\kappa(g_3(y))=0$. A fortiori, $\sN\otimes\kappa(y)=0$, donc $\sN=0$ par le lemme de Nakayama. On a bien montr\'e que $\sF$ est inversible en $y$.

  On voit maintenant en \'ecrivant $n=am+b$ avec $1\leqslant b\leqslant m$ que $\omega^{[n]}_{\sY_3/H_3}$ est plat de formation commutant \`a tout changement de base, pour tout $n\in \Z$.

\subsubsection{} 
\label{ouvertslc}
  On se restreint alors \`a l'ouvert $H_4\subset H_3$ le long duquel les fibres de $g_3$ sont \`a singularit\'es slc (et on note $g_4:\sY_4\to H_4$ le changement de base). L'existence d'un tel ouvert est d\'emontr\'ee dans~\cite[Proposition A.1.1]{AbraHa}, o\`u la preuve est attribu\'ee \`a Alexeev.
 L'argument repose crucialement sur l'inversion de l'adjonction (voir \S~\ref{paradj}).

Voir que le lieu $\{x\in H_3|\sY_{3,x}\textrm{\hspace{.2em}a des singularit\'es slc}\}$ est un sous-ensemble constructible de $H_3$ est ais\'e. En effet, si $\eta$ est le point g\'en\'erique d'une composante irr\'eductible de $H_3$, une log r\'esolution de $\sY_{3,\eta}$ s'\'etend en une log r\'esolution des fibres de~$g_3$ au-dessus d'un voisinage $U$ de $\eta$ dans la vari\'et\'e r\'eduite $H_3^{\red}$. En calculant les discr\'epances des fibres de~$g_3$ au-dessus de $U$ sur ces log r\'esolutions, on montre que l'une est slc si et seulement si les autres le sont. On conclut par r\'ecurrence noeth\'erienne.

Il reste \`a d\'emontrer que ce lieu est stable par g\'en\'erisation. On se ram\`ene \`a une situation relative sur le spectre $T$ d'un anneau de valuation discr\`ete, de point ferm\'e $t$ et de point g\'en\'erique $\eta$. On dispose d'un morphisme propre et plat $f:\sX\to T$ dont les fibres satisfont les conditions (i)-(iv) de la d\'efinition~\ref{defslc}, tel que $\sX_t$ est slc et $\omega_{\sX/T}^{[m]}$ inversible. Le lemme~\ref{XXt}, dont la preuve reposait sur l'inversion de l'adjonction, assure que $(\sX,\sX_t)$ est slc, donc que $\sX_\eta$ est slc.

\subsubsection{}

On dispose de deux fibr\'es en droites naturels sur $\sY_4$ : le faisceau $m$-canonique $\omega_{\sY_4/H_4}^{[m]}$, qui est inversible par le \S\ref{inversible}, et le fibr\'e tautologique $\sO_{\sY_4/H_4}(1)$ induit par $\sO_{\bP^{F(m)-1}_k}(1)$.
On souhaite maintenant se restreindre au sous-sch\'ema localement ferm\'e $H_5\subset H_4$ au-dessus duquel ces deux fibr\'es en droites co\"incident, Zariski-localement sur $H_5$.
L'existence d'un tel sous-sch\'ema n'est pas \'evidente, car comme les fibres de $g_4$ peuvent ne pas \^etre irr\'eductibles, le foncteur de Picard $\Pic_{\sY_4/H_4}$ pourrait ne pas \^etre s\'epar\'e. Par cons\'equent, $H_5$ pourrait ne pas \^etre ferm\'e dans $H_4$. 

Par~\cite[Corollaire 7.8.7]{EGA32}, le faisceau $g_{4,*}\sO_{\sY_4}$ est localement libre de formation commutant au changement de base. Les fibres de $g_4$ \'etant r\'eduites par le \S\ref{deminormal}, le morphisme naturel $H'_4:=\Spec_{H_4}(g_{4,*}\sO_{\sY_4})\to H_4$ est fini \'etale. En consid\'erant la factorisation de Stein $g'_4:\sY_4\to H'_4$ de $g_4$, on se ram\`ene ais\'ement au cas o\`u les fibres de $g_4$ sont connexes, ce qu'on suppose d\'esormais. Dans ce cas, l'argument qui suit est donn\'e dans~\cite[Lemma 1.19]{Viehweg}.

On note $\sL:=\omega_{\sY_4/H_4}^{[m]}\otimes\sO_{\sY_4/H_4}(-1)$. Si $0\to\sL\to\sE^0\to\sE^1\to\dots$ est une r\'esolution de $\sL$ par des sommes de fibr\'es en droites assez amples et si l'on pose $\sF^i:=g_{4,*}\sE^i$, la cohomologie du complexe de fibr\'es vectoriels $0\to \sF^0\to \sF^1\to\dots$ sur $H_4$ calcule les $R^ig_{4,*}\sL$, et ce apr\`es tout changement de base (par cohomologie et changement de base). Soit $\sQ:=\Coker(\sF^0\to\sF^1)$. On commence par se restreindre \`a l'ouvert o\`u $\sQ$ a rang $\leqslant\rg(\sF^1)-\rg(\sF^0)+1$, puis au ferm\'e d\'efini par l'annulation des mineurs de taille $\rg(\sF^0)$ de $\sF^0\to\sF^1$. Le faisceau $\sQ$ est maintenant localement libre de rang $\rg(\sF^1)-\rg(\sF^0)+1$ par~\cite[Proposition 20.8]{Eisenbud}. Il suit que le noyau $\sK:=\Ker(\sF^0\to\sF^1)$ est localement libre de rang $1$, de formation commutant \`a tout changement de base.  
On d\'eduit que $g_{4,*}\sL=\sK$ est inversible et de formation commutant \`a tout changement de base.
 Il suffit pour conclure de se restreindre \`a l'ouvert $H_5$ au-dessus duquel le morphisme d'adjonction $g_4^*g_{4,*}\sL\to\sL$ est un isomorphisme.
 On note bien s\^ur $g_5:\sY_5\to H_5$ le changement de base.

\subsubsection{}
Consid\'erons l'ouvert $H_6\subset H_5$ au-dessus duquel $\sO_{\sY_5/H_5}(1)$ n'a pas de cohomologie sup\'erieure~\cite[Theorem 12.8]{Hartshorne}. Notant $g_6:\sY_6\to H_6$ le changement de base, le faisceau $g_{6,*}\sO_{\sY_6/H_6}(1)$ est un fibr\'e vectoriel par~\cite[Theorem 12.11]{Hartshorne}. On  se restreint finalement \`a l'ouvert $H'\subset H_6$ o\`u le morphisme $H^0(\bP_{k}^{F(m)-1}, \sO(1))\to g_{6,*}\sO_{\sY_6/H_6}(1)$ de fibr\'es vectoriels sur $H_6$ est un isomorphisme. Ce dernier point assure que la famille $g':\sY'\to H'$ obtenue par changement de base est $m$-canoniquement plong\'ee et ach\`eve la preuve du th\'eor\`eme~\ref{threpr}.

\subsection{Le champ de modules}
\label{champ}

On peut maintenant construire le champ de modules $\overline{\sM}_F$. Le groupe $\PGL_{F(m)}$ agit sur $\bP^{F(m)-1}_k$ par changement de coordonn\'ees, donc aussi sur son sch\'ema de Hilbert $H$.
Comme le morphisme $\iota:H'\to H$ est d\'efini par une propri\'et\'e universelle, cette action se rel\`eve naturellement en une action sur $H'$.
J'affirme que la cat\'egorie fibr\'ee en groupo\"ides $\overline{\sM}_F$ d\'efinie en~\eqref{fonctpoints} s'identifie canoniquement au champ quotient $[H'/\PGL_{F(m)}]$\footnote{Rappelons qu'un B-point du champ quotient $[H'/\PGL_{F(m)}]$ est la donn\'ee d'un $\PGL_{F(m)}$-torseur $I$ sur $B$, et d'un morphisme $\PGL_{F(m)}$-\'equivariant $\mu:I\to H'$.} et est donc un champ alg\'ebrique~\cite[(4.6.1)]{LMB}.

 Intuitivement, cette assertion est claire : $H'$ param\`etre les vari\'et\'es stables de fonction de Hilbert $F$ qui sont $m$-canoniquement plong\'ees dans $\bP^{F(m)-1}_k$, et deux telles sous-vari\'et\'es de $\bP^{F(m)-1}_k$ sont isomorphes comme vari\'et\'es abstraites si et seulement si elles diff\`erent par un changement de coordonn\'ees projectives.

 Justifions-le plus formellement. On se contente ici de construire un $1$-morphisme $\overline{\sM}_F\to [H'/\PGL_{F(m)}]$ ; il est ais\'e de v\'erifier que c'est un isomorphisme. Consid\'erons un $B$-point de $\overline{\sM}_F$, qui correspond  \`a une famille stable $f:\sX\to B$ dont les fibres ont fonction de Hilbert $F$ comme en~\eqref{fonctpoints}. Le faisceau $\omega_{\sX/B}^{[m]}$ est plat et inversible sur les fibres de $f$ par le th\'eor\`eme~\ref{thborne}. Il est donc inversible par l'argument du \S\ref{inversible}.
Par cohomologie et changement de base~\cite[III Theorem 12.11]{Hartshorne}, 
qui s'applique par le th\'eor\`eme~\ref{thborne} et comme $\omega_{\sX/B}^{[m]}$ est $f$-plat, le faisceau $f_*\omega_{\sX/B}^{[m]}$ est localement libre de rang $F(m)$. Le faisceau inversible $\omega_{\sX/B}^{[m]}$ est tr\`es ample relativement \`a $f$, et plonge $\sX$ dans le fibr\'e projectif $\bP_B(f_*\omega_{\sX/B}^{[m]})$ sur $B$. Soit $I$ le sch\'ema des isomorphismes $\Isom_B (\bP^{F(m)-1}_B,\bP_B(f_*\omega_{\sX/B}^{[m]}))$ : c'est un $\PGL_{F(m)}$-torseur sur $B$. En tirant $f:\sX\to B$ sur $I$ et en utilisant l'isomorphisme universel au-dessus de $I$, on obtient un morphisme $\lambda:I\to H$, qui factorise \`a travers un morphisme $\mu:I\to H'$ par la propri\'et\'e universelle de $H'$. Le couple $(I,\mu)$ est le $B$-point de $[H'/\PGL_{F(m)}]$ que nous cherchions \`a construire.

Remarquons que l'action de $\PGL_{F(m)}$ se rel\`eve naturellement \`a l'espace total $\sY'$ de la famille $g':\sY'\to H'$. Notant $\overline{\sU}_F:=[\sY'/\PGL_{F(m)}]$, on voit que le champ $\overline{\sM}_F$ porte une famille stable universelle
$u:\overline{\sU}_F\to \overline{\sM}_F$.

\medskip

Le champ $\overline{\sM}_F$ est  propre. En effet, le crit\`ere valuatif de propret\'e~\cite[Theorem 11.5.1]{Olsson} est v\'erifi\'e par le th\'eor\`eme de r\'eduction stable, sous sa forme donn\'ee au th\'eor\`eme~\ref{redstable2} (la s\'eparation de $\overline{\sM}_F$ correspondant \`a la propri\'et\'e d'unicit\'e dans le th\'eor\` eme de r\'eduction stable).

On en d\'eduit la finitude des groupes d'automorphismes des vari\'et\'es stables. En effet, le crit\`ere valuatif de propret\'e pour ces groupes d'automorphismes est un cas particulier du crit\`ere valuatif de s\'eparation pour $\overline{\sM}_F$ et est donc v\'erifi\'e. Ces groupes d'automorphismes sont donc propres. Comme ils sont de plus affines (ce sont des sous-groupes de $\PGL_{F(m)}$), ils sont finis. 

Le sch\'ema en groupes des automorphismes d'une vari\'et\'e stable est de plus r\'eduit, comme tout sch\'ema en groupes en caract\'eristique nulle. 
Appliquant~\cite[Theorem~8.3.3, Remark 8.3.4]{Olsson}, on voit que $\overline{\sM}_F$ est un champ de Deligne-Mumford.

\subsection{L'espace de modules grossier}
\label{grossier}

C'est un th\'eor\`eme de Keel et Mori~\cite{KeMo} que tout champ s\'epar\'e de type fini sur un corps
admet un espace de modules grossier, qui est un espace alg\'ebrique s\'epar\'e de type fini sur ce corps. Notons $\overline{M}_F$ l'espace de modules grossier de $\overline{\sM}_F$. La propret\'e de $\overline{\sM}_F$ entra\^ine celle de $\overline{M}_F$.

Il reste \`a d\'emontrer la projectivit\'e de $\overline{M}_F$. Les premi\`eres approches \`a ce probl\`eme ont repos\'e sur une autre strat\'egie de construction de $\overline{M}_F$ que celle pr\'esent\'ee ici : la th\'eorie g\'eom\'etrique des invariants de Mumford~\cite{GIT}, dont l'avantage est de produire
des vari\'et\'es qui sont, par construction, munies d'un fibr\'e ample. C'est ainsi que  Knudsen~\cite{Knudsen} et Gieseker et Mumford~\cite[\S 5]{Mumford} ont d\'emontr\'e la projectivit\'e de $\overline{M}_g$. Cette technique a \'egalement permis \`a Gieseker de d\'emontrer la quasi-projectivit\'e des espaces de modules de surfaces lisses canoniquement polaris\'ees~\cite{Gieseker}, et Viehweg a r\'eussi, dans un v\'eritable tour de force, \`a faire de m\^eme en dimension arbitraire~\cite{Viehweg}. Cette m\'ethode se heurte cependant \`a des difficult\'es s\'erieuses, d\'ej\`a soulev\'ees par Mumford~\cite[\S 3]{Mumford} et Shah~\cite{Shah}, dans le cas des vari\'et\'es singuli\`eres.

\medskip

Koll\'ar a propos\'e dans~\cite{Kocomplete} une autre m\'ethode pour d\'emontrer la projectivit\'e d'un espace de modules qu'on sait \^etre propre. Expliquons comment elle s'applique \`a $\overline{M}_F$.

L'espace alg\'ebrique $\overline{M}_F$ porte de nombreux fibr\'es en droites naturels. Rappelons que $u:\overline{\sU}_F\to\overline{\sM}_F$ est la famille universelle construite au \S\ref{champ}. Si $n>0$ est un entier assez divisible, le faisceau $\omega^{[n]}_{\overline{\sU}_F/\overline{\sM}_F}$ est un fibr\'e en droites $u$-ample dont les restrictions aux fibres de $u$ n'ont pas de cohomologie sup\'erieure. Pour un tel $n$, le faisceau $u_*\omega^{[n]}_{\overline{\sU}_F/\overline{\sM}_F}$ est un fibr\'e vectoriel sur $\overline{\sM}_F$.
Pour $N>0$ assez divisible, le fibr\'e en droites $\det(u_*\omega^{[n]}_{\overline{\sU}_F/\overline{\sM}_F})^{\otimes N}$ sur $\overline{\sM}_F$ descend en un unique fibr\'e en droites $\sL_{n, N}$ sur $\overline{M}_F$
par~\cite{Rydh} (voir aussi~\cite[Lemma 9.26]{Viehweg}).

\begin{theo}
\label{amplitude}
Si $n$ est assez divisible, $\sL_{n, N}$ est un fibr\'e ample sur $\overline{M}_F$.
\end{theo}

  Ce th\'eor\` eme a \'et\'e d\'emontr\'e par Koll\'ar dans~\cite[Corollary 5.6]{Kocomplete} pour les espaces de modules de surfaces stables et par Fujino~\cite{Fujino} en g\'en\'eral. 

Koll\'ar a remarqu\'e qu'il suffisait de v\'erifier que pour toute courbe projective lisse $C$ et pour tout morphisme $\psi:C\to\overline{\sM}_F$, le fibr\'e vectoriel $\psi^*(u_*\omega^{[n]}_{\overline{\sU}_F/\overline{\sM}_F})$ est nef~\cite[Theorem 2.6]{Kocomplete}. Qu'on puisse d\'eduire le th\'eor\`eme~\ref{amplitude}, qui est un \'enonc\'e de positivit\'e \textit{stricte}, d'un tel r\'esultat de positivit\'e \textit{au sens large} est remarquable. L'argument repose sur le crit\`ere d'amplitude de Nakai-Moishezon. Le gain de positivit\'e est fourni par la variation, dans une famille de vari\'et\'es stables, des \'equations de ces vari\'et\'es dans leurs plongements pluricanoniques (voir~\cite[\S 2.9]{Kocomplete}).

La v\'erification du fait que $\psi^*(u_*\omega^{[n]}_{\overline{\sU}_F/\overline{\sM}_F})$ est nef est due \`a Koll\'ar~\cite[Theorem~4.12]{Kocomplete} pour les familles de surfaces et \`a Fujino~\cite[Theorem 1.7]{Fujino} en g\'en\'eral. Elle s'appuie sur des th\'eor\`emes de semipositivit\'e en th\'eorie de Hodge, qui remontent \`a Fujita~\cite{Fujita} et qui sont d\'emontr\'es dans la g\'en\'eralit\'e requise dans~\cite[Corollary~5.23]{FF}.

\medskip

Patakfalvi et Xu~\cite{PatakXu} ont montr\'e l'amplitude d'un autre fibr\'e en droites, d\'efini seulement sur la normalisation de $\overline{M}_F$ : le \textit{fibr\'e en droites CM}. Comme $\overline{M}_F$ est propre, cela fournit une autre preuve de sa projectivit\'e (voir~\cite[Proposition 2.6.2]{EGA31}).

\bigskip

{\bf Remerciements.}
Merci \`a Olivier Debarre, St\'ephane Druel, Javier Fres\'an, Christopher Hacon, J\'anos Koll\'ar et Bertrand R\'emy pour leurs utiles commentaires.

\end{document}